\documentclass{article}

\usepackage[ams]{maintemplate}
\usepackage{array}
\usepackage{tabularx}
\usepackage{chngcntr}

\newcolumntype{V}{>{\centering\arraybackslash} m{.2\linewidth} }

\usetikzlibrary{fit}

\newcommand{\ext}{\operatorname{ext}}

\makeatletter
\theoremstyle{plain}
\newtheorem*{rep@theorem}{\rep@title}
\newcommand{\newreptheorem}[2]{%
\newenvironment{rep#1}[1]{%
 \def\rep@title{#2 \ref*{##1}}%
 \begin{rep@theorem}}%
 {\end{rep@theorem}}}
\makeatother

\newreptheorem{thm}{Theorem}

\title{Turing degrees of hyperjumps}
\author{
	Hayden R. Jananthan \\ 
	Lincoln Laboratory Supercomputing Center \\ 
	Massachusetts Institute of Technology \\ 
	Lexington, MA 02421, USA \\ 
	\url{https://jananthan.mit.edu} \\ 
	\url{hayden.r.jananthan@gmail.edu} \\ 
	\\
	Stephen G. Simpson \\ 
	Department of Mathematics \\ 
	Vanderbilt University \\ 
	Nashville, TN 37203, USA \\ 
	\url{https://sgslogic.net} \\ 
	\url{sgslogic@gmail.com}
}
\date{
	First draft: August 25, 2019 \\ 
	This draft: July 18, 2024
}

\begin{document}

\maketitle

\begin{abstract}
	The Posner-Robinson Theorem states that for any reals $Z$ and $A$ such that $Z \oplus 0' \turingleq A$ and $0 \turingle Z$, there exists $B$ such that $A \turingeq B' \turingeq B \oplus Z \turingeq B \oplus 0'$. Consequently, any nonzero Turing degree $\turingdeg(Z)$ is a Turing jump relative to some $B$. Here we prove the hyperarithmetical analog, based on an unpublished proof of Slaman, namely that for any reals $Z$ and $A$ such that $Z \oplus \mathcal{O} \turingleq A$ and $0 \hyple Z$, there exists $B$ such that $A \turingeq \mathcal{O}^B \turingeq B \oplus Z \turingeq B \oplus \mathcal{O}$. As an analogous consequence, any nonhyperarithmetical Turing degree $\turingdeg(Z)$ is a hyperjump relative to some $B$. 
\end{abstract}

\tableofcontents

\section{Introduction}

Our starting point is the \emph{Friedberg Jump Theorem}:

\begin{thm}[Friedberg Jump Theorem] \label{friedberg jump theorem intro} \cite[Theorem 13.3.IX, pg. 265]{rogers1967theory}
	Suppose $A$ is a real such that $0' \turingleq A$. Then there exists $B$ such that
	\begin{equation*}
		A \turingeq B' \turingeq B \oplus 0'.
	\end{equation*}
\end{thm}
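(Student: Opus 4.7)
The plan is to construct $B$ by a finite-extension argument which simultaneously (i) codes $A$ bit-by-bit into $B$, securing $A \turingleq B \oplus 0'$, and (ii) decides each membership question ``$e \in B'$?'' at a finite stage using $0' \turingleq A$, securing $B' \turingleq A$. Concretely, I would build a chain of finite binary strings $\sigma_0 \subseteq \sigma_1 \subseteq \cdots$ and set $B = \bigcup_e \sigma_e$. At stage $e$, starting from $\sigma_e$, first run a \emph{forcing sub-stage}: ask the $\Sigma^0_1$ question ``does there exist $\tau \supseteq \sigma_e$ with $\Phi_e^\tau(e) \converge$?'' This is uniformly decidable from $\sigma_e$ using $0'$. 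If yes, let $\sigma_e^*$ be the lexicographically least such witness; if no, let $\sigma_e^* = \sigma_e$. Then run a \emph{coding sub-stage}: set $\sigma_{e+1} = \sigma_e^* \concat \langle A(e) \rangle$.

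The design ensures that $e \in B'$ if and only if stage $e$ took the yes-branch: in the yes case, $\Phi_e^B(e) \converge$ because convergence depends on only finitely many bits, all of which are pinned down by $\sigma_e^* \subseteq B$; in the no case, no extension of $\sigma_e$ forces convergence, hence $\Phi_e^B(e) \diverge$. To verify $B' \turingleq A$, note that using $0' \turingleq A$ for the $\Sigma^0_1$ questions and using $A$ itself for the coding bits, we may run the entire construction from $A$ and recover the yes/no verdict at each stage, hence $B'$. To verify $A \turingleq B \oplus 0'$, inductively recover $\sigma_e$ from $B \oplus 0'$: given $\sigma_e$, use $0'$ to decide the stage-$e$ question and to search for $\sigma_e^*$, and then read $A(e)$ as the bit of $B$ in position $|\sigma_e^*|$.

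Combining these inequalities with the trivial $B \oplus 0' \turingleq B'$ yields
\[
A \turingleq B \oplus 0' \turingleq B' \turingleq A,
\]
so all three quantities are Turing equivalent, as required.

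The main obstacle, such as it is, is arranging that forcing and coding do not interfere. If we tried to code $A(e)$ \emph{before} forcing convergence, later extensions could undo the coded value; if we tried to force convergence \emph{after} coding, the coded bit might itself have already determined $\Phi_e^B(e) \converge$ or $\diverge$ in a way we cannot control. Separating the two sub-stages in the order ``force first, then code'' sidesteps both difficulties, since once $\sigma_e^*$ is fixed the use of $\Phi_e^{\sigma_e^*}(e)$ is bounded by $|\sigma_e^*|$ and cannot be disturbed by the single appended coding bit or any further extension.
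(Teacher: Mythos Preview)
Your argument is correct and is the standard finite-extension (Cohen forcing) proof of Friedberg's theorem. Note, however, that the paper does not actually prove this statement: it is quoted in the introduction with a citation to Rogers' textbook and then used only as motivation for the hyperarithmetical analogs developed later. So there is no ``paper's own proof'' to compare against here; your write-up is essentially the textbook argument the citation points to.

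One small expository point: in the ``no'' branch you might say explicitly that if $\Phi_e^B(e)$ converged then some finite $\tau \subset B$ with $\tau \supseteq \sigma_e$ would already witness convergence, contradicting the $0'$-answer. You do say this, but the phrase ``no extension of $\sigma_e$ forces convergence'' could be misread as a statement about proper extensions only; it holds for all extensions, including $B$'s initial segments, which is what you need.
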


There are several refinements of the Friedberg Jump Theorem. One such extension shows that $B$ can be taken to be an element of any special $\Pi^0_1$ class $P \subseteq \cantor$. Here \emph{special} means that $P$ is nonempty and has no recursvie elements.

\begin{thm} \label{jump inversion in special pi01 class intro} \cite[following Theorem 3.1, pg. 37]{jockusch1971classes}
	Suppose $P \subseteq \cantor$ is a special $\Pi^0_1$ class and $A$ is a real such that $0' \turingleq A$. Then there exists $B \in P$ such that
	\begin{equation*}
		A \turingeq B' \turingeq B \oplus 0'.
	\end{equation*}
\end{thm}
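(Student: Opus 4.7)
The plan is to adapt Friedberg's jump-inversion construction to operate inside $P$. Because $P$ is special, a Cantor-Bendixson argument yields a perfect $\Pi^0_1$ subclass $P' \subseteq P$ --- that is, a recursive tree $T$ with $[T] = P'$ whose extendible part $T^*$ has two extendible children at every node. Note $T^*$ is $\Pi^0_1$, hence $0'$-recursive, and $0'$ is available throughout the construction since $A \turinggeq 0'$.

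I would then build $B = \bigcup_s \sigma_s \in P' \subseteq P$ as the union of an $A$-recursive chain $\sigma_0 \subsetneq \sigma_1 \subsetneq \cdots$ of extendible nodes in $T^*$, maintaining a finite restraint set $\mathcal{E}_s \subseteq \omega$ with associated $\Pi^0_1$ subclass $P_s = P' \cap [\sigma_s] \cap \bigcap_{e \in \mathcal{E}_s}\{X : \{e\}^X(e) \diverge\}$. Stages alternate two requirement types. For a jump requirement at index $e$, use $0'$ to decide whether $P_s \cap \{X : \{e\}^X(e) \diverge\}$ is nonempty --- a $\Sigma^0_1$ question, since emptiness of a $\Pi^0_1$ class is recursively enumerable; if yes, add $e$ to $\mathcal{E}_{s+1}$, committing to $\{e\}^B(e) \diverge$, and if no, $\{e\}^B(e) \converge$ is already forced and no extension is required. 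For a coding requirement at bit $n$ of $A$, use $0'$ to locate a splitting extendible node $\tau \supseteq \sigma_s$ in the tree for $P_s$ and set $\sigma_{s+1} = \tau\concat\langle A(n)\rangle$. The $A$-recursive bookkeeping of jump outcomes will give $B' \turingleq A$, while the $0'$-recoverable stage structure will give $A \turingleq B \oplus 0'$.

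The main obstacle is managing the interplay between divergence restraints, which can shrink $P_s$ to a non-perfect $\Pi^0_1$ subclass, and coding requirements, which need splitting extendible nodes in $P_s$. A priori, a new restraint could collapse $P_s$ above $\sigma_s$ to a single branch, blocking subsequent coding; this is addressed by a priority argument, where each jump stage uses $0'$ to additionally verify that a splitting extendible node above $\sigma_s$ survives the candidate restraint (a $\Sigma^0_1$ question), refusing the restraint otherwise and accepting convergence. The specialness of $P$, via its passage to the perfect subclass $P'$, provides enough room for this scheme to satisfy infinitely many coding and jump requirements, delivering $B \in P$ with $A \turingeq B' \turingeq B \oplus 0'$.
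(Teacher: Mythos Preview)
The paper does not give a proof of this theorem; it is cited in the introduction as a known result of Jockusch and Soare, with no argument supplied. I will therefore comment on your proposal on its own merits.

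Your overall strategy is the right one, but the ``main obstacle'' you identify is illusory, and the fix you propose for it is both unnecessary and technically flawed. The key observation you are missing is that any nonempty $\Pi^0_1$ subclass of a special $\Pi^0_1$ class is itself special: it inherits the absence of recursive members from $P$, and it cannot be a singleton since a $\Pi^0_1$ singleton is recursive. Consequently, whenever your restrained class $P_s \cap \{X : \{e\}^X(e)\diverge\}$ is nonempty, it is automatically special and hence splits above every one of its extendible nodes. No priority mechanism is needed: you simply commit to the restraint whenever the restrained class is nonempty, and subsequent coding stages always find their splittings. For the same reason, the Cantor--Bendixson step is superfluous --- the original $P$ is already perfect.

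Your proposed fix has two defects. First, the check ``does the restrained class have a splitting extendible node above $\sigma_s$'' is $\Sigma^0_2$, not $\Sigma^0_1$: extendibility of a node in a $\Pi^0_1$ class is a $\Pi^0_1$ condition, so asking for the existence of a node with two extendible children is $\Sigma^0_2$, and you only have $0'$ available. Second, ``refusing the restraint and accepting convergence'' is unsound as stated: if the restrained class is nonempty then some $X \in P_s$ has $\{e\}^X(e)\diverge$, and nothing you have written prevents $B$ from being that $X$, in which case you have recorded the wrong value for $B'(e)$. Once you replace the priority argument by the specialness observation above, both issues disappear and the construction goes through cleanly.
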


Another refinement is the \emph{Posner-Robinson Theorem}:

\begin{thm}[Posner-Robinson Theorem] \label{posner-robinson theorem intro} \cite[Theorem 1, pg. 715]{posner1981degrees} \cite[Theorem 3.1, pg. 1228]{jockusch1984pseudojump}
	Suppose $Z$ and $A$ are reals such that $Z \oplus 0' \turingleq A$ and $0 \turingle Z$. Then there exists $B$ such that
	\begin{equation*}
		A \turingeq B' \turingeq B \oplus Z \turingeq B \oplus 0'.
	\end{equation*}
\end{thm}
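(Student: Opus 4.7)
The plan is to prove the theorem via a finite-extension construction of $B$ using $A$ as oracle, augmenting the proof of the Friedberg Jump Theorem (\cref{friedberg jump theorem intro}) with a coding step that exploits the non-recursiveness of $Z$.

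Build $B$ as a limit of finite strings $\sigma_0 \subseteq \sigma_1 \subseteq \cdots$ with $B := \bigcup_s \sigma_s$. Since all decisions use $A$, we automatically have $B \turingleq A$. At each stage $e$ we perform two tasks on disjoint recursive subsets of $\mathbb{N}$.

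\emph{Task 1 (Friedberg).} Carry out the finite-extension steps underlying the Friedberg Jump Theorem. Using $0' \turingleq A$, decide the $\Sigma^0_1$ question ``does some $\tau \supseteq \sigma_e$ satisfy $\Phi_e^\tau(e) \converge$?''; extend $\sigma_e$ to such a $\tau$ if yes; and encode $A(e)$ into $B$ via Friedberg's coding. By the proof of \cref{friedberg jump theorem intro}, this yields $B' \turingleq A$ and $A \turingleq B \oplus 0'$.

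\emph{Task 2 (Posner-Robinson coding).} Reserve positions $p_0 < p_1 < \cdots$ disjoint from those touched in Task 1, with each $p_e$ chosen beyond the use of the witnessing computations $\Phi_j^{\sigma_j^+}(j)$ for $j \le e$. Set $B(p_e) := Z(e)$ if $e \in 0'$, and $B(p_e) := 1 - Z(e)$ otherwise. Then $e \in 0'$ if and only if $B(p_e) = Z(e)$, giving $0' \turingleq B \oplus Z$.

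Combining these: $B' \turingleq A$ from Task 1; $A \turingleq B'$ from $B' \turinggeq B \oplus 0' \turinggeq A$; $A \turingleq B \oplus Z$ from $B \oplus Z \turinggeq B \oplus 0' \turinggeq A$ (using Task 2 together with Task 1); and the reverse inequalities $B \oplus Z, B \oplus 0' \turingleq A$ follow trivially from $B, Z, 0' \turingleq A$. Hence $A \turingeq B' \turingeq B \oplus Z \turingeq B \oplus 0'$, as desired.

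The main obstacle is ensuring that Task 2's bits do not disrupt Task 1's jump-inversion decisions; placing each $p_e$ beyond the use of all prior witnessing computations ensures that Task 1's $\Sigma^0_1$ answers remain valid under the later extensions added by Task 2, because convergence witnesses persist under extension and non-existence of a witness is inherited by any extension of $\sigma_e$. The hypothesis $0 \turingle Z$ is essential here: if $Z$ were recursive, then Task 2's coding would directly yield $0' \turingleq B$, and combined with Task 1's $A \turingleq B \oplus 0' \turingeq B$ this would force $B \turingeq A$ and therefore $B' \turinggeq A' \turingge A$, contradicting $B' \turingleq A$.
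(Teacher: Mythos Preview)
The paper does not prove \cref{posner-robinson theorem intro}; it is cited from \cite{posner1981degrees,jockusch1984pseudojump} as background for the hyperarithmetical analog that the paper actually establishes, so there is no in-paper argument to compare with.

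Your sketch has a genuine gap in Task~2. The decoding ``$e\in 0'\iff B(p_e)=Z(e)$'' gives $0'\turingleq B\oplus Z$ only if the map $e\mapsto p_e$ is itself computable from $B\oplus Z$. But you choose each $p_e$ adaptively, ``beyond the use of the witnessing computations $\Phi_j^{\sigma_j^+}(j)$ for $j\le e$,'' so locating $p_e$ requires replaying Task~1 through stage $e$. The Friedberg query at stage $j$ is an arbitrary $\Sigma^0_1$ question whose $0'$-index depends on $\sigma_j$ and need not lie below $e$; hence even a bootstrap that has already recovered $0'\restrict e$ from earlier coding bits cannot answer it. Equivalently, the stage-$j$ Friedberg outcome is exactly the bit $[j\in B']$, so you are tacitly assuming $B'\turingleq B\oplus Z$ in order to find the positions that were supposed to establish that very reduction. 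Your phrase ``disjoint recursive subsets of $\mathbb N$'' is incompatible with this adaptive choice of $p_e$, and fixing the $p_e$ recursively in advance would force Task~1's extensions to avoid those positions, which the least-convergent-witness step cannot be made to respect.

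The standard Posner--Robinson argument does not run a separate one-time-pad pass. It folds $Z$ into the jump step so that the entire sequence $(\sigma_e)$ becomes reconstructible from $B\oplus Z$: when some extension of $\sigma_e$ makes $\Phi_e$ converge, one shows (and this is exactly where $0\turingle Z$ is used) that there must be a convergent extension that first agrees with and then deviates from $Z$, since otherwise the convergent extensions would trace out $Z$ and make it recursive; one extends to such a $\tau$, and the deviation point is then visible from $B\oplus Z$, allowing $\sigma_{e+1}$ to be recomputed without an oracle for $0'$. Thus the hypothesis $0\turingle Z$ does real work inside the construction, not merely as the after-the-fact consistency check in your final paragraph.
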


In this paper we prove hyperarithmetical analogs of \cref{jump inversion in special pi01 class intro} and \cref{posner-robinson theorem intro}. The hyperarithmetical analog of \cref{friedberg jump theorem intro} is due to Macintyre \cite[Theorem 3, pg. 9]{macintyre1977transfinite}. In these hyperarithmetical analogs, the Turing jump operator $X \mapsto X'$ is replaced by the hyperjump operator $X \mapsto \mathcal{O}^X$ and $\Pi^0_1$ classes are replaced by $\Sigma^1_1$ classes. A feature of \cite[Theorem 3, pg. 9]{macintyre1977transfinite} and of our results is that they involve Turing degrees rather than hyperdegrees, so for instance $\mathcal{O}^B$ is not only hyperarithmetically equivalent to $A$, but in fact Turing equivalent to $A$. 

Here is an outline of this paper:

In \S 2 we prove the following basis theorem for uncountable $\Sigma^1_1$ classes $K \subseteq \cantor$.

\begin{repthm}{gandy-kreisel basis theorem}
	Suppose $K \subseteq \cantor$ is an uncountable $\Sigma^1_1$ class and $Z$ and $A$ are reals such that $Z \oplus \mathcal{O} \turingleq A$ and $0 \hyple Z$. Then there exists $B \in K$ such that 
	\begin{equation*}
		A \turingeq \mathcal{O}^B \turingeq B \oplus \mathcal{O}
	\end{equation*}
	and $Z \hypnleq B$.
\end{repthm}

In \S 3 we prove the following analog of \cref{posner-robinson theorem intro}, which is essentially due to Slaman \cite{slaman2018email}.

\begin{repthm}{posner-robinson for hyperjumps}
	Suppose $Z$ and $A$ are reals such that $Z \oplus \mathcal{O} \turingleq A$ and $0 \hyple Z$. Then there exists $B$ such that
	\begin{equation*}
		A \turingeq \mathcal{O}^B \turingeq B \oplus Z \turingeq B \oplus \mathcal{O}.
	\end{equation*}
\end{repthm}

The remainder of this section fixes notation and terminology.

$g \colonsub A \to B$ denotes a partial function with domain $\dom g \subseteq A$ and codomain $B$. For $a\in A$, if $a \in \dom g$ then we say `$g(a)$ converges' or `$g(a)$ is defined' and write $g(a) \converge$. Otherwise, we say `$g(a)$ diverges' or `$g(a)$ is undefined' and write $g(a) \diverge$. If $f$ and $g$ are two partial functions ${\subseteq} A \to B$ and $a\in A$, then $f(a) \simeq g(a)$ means $( f(a) \converge \wedge g(a) \converge \wedge f(a)=g(a)) \vee (f(a) \diverge \wedge g(a) \diverge)$. We write $f(a) \converge = b$ to mean that $f(a) \converge$ and $f\colon a \mapsto b$.

$\baire$ and $\cantor$ denote the Baire and Cantor spaces, respectively, whose elements we sometimes call \emph{reals}. We identify $\cantor$ and the powerset $\mathcal{P}(\mathbb{N})$ in the usual manner.

If $S$ is a set, then $S^\ast$ is the set of strings of elements from $S$. If $s_0,\ldots,s_{n-1} \in S$, then $\sigma = \langle s_0,\ldots,s_{n-1}\rangle \in S^\ast$ denotes the string of \emph{length $|\sigma| \coloneq n$} defined by $\sigma(k) = s_k$. If $\langle s_0,\ldots,s_{n-1}\rangle, \langle t_0,\ldots, t_{m-1}\rangle \in S^\ast$, then their \emph{concatenation} is $\langle s_0,\ldots, s_{n-1}\rangle \concat \langle t_0,\ldots, t_{m-1}\rangle \coloneq \langle s_0,\ldots, s_{n-1}, t_0, \ldots, t_{m-1}\rangle$. If $\sigma, \tau \in S^\ast$, then $\sigma$ is an \emph{initial segment} of $\tau$ (equivalently, $\tau$ is an extension of $\sigma$) written $\sigma \subseteq \tau$, if $\tau \restrict |\sigma| = \sigma$. If $f \colon \mathbb{N} \to S$ then $\sigma \in S^\ast$ is an \emph{initial segment} of $f$ (equivalently, $f$ is an \emph{extension} of $\sigma$), written $\sigma \subset f$, if $f \restrict |\sigma| = \sigma$. $\sigma, \tau \in S^\ast$ are \emph{incompatible} if neither is an initial segment of the other. If $\leq$ is a partial order on $S$, then the \emph{lexicographical ordering} $\leq_\mathrm{lex}$ on $S^\ast$ is defined by setting $\sigma \leq_\mathrm{lex} \tau$ if $\sigma \subseteq \tau$ or, where $k$ is the least index at which $\sigma(k) \neq \tau(k)$, then $\sigma(k) < \tau(k)$.

$\varphi_e^{(k)}$ denotes the $e$-th partial recursive function ${\subseteq} \mathbb{N}^k \to \mathbb{N}$; $e$ is called an \emph{index} of $\varphi_e^{(k)}$. Likewise, if $f \in \baire$ then $\varphi_e^{(k),f}$ denotes the $e$-th partial function $\varphi_e^{(k),f}\colon {\subseteq} \mathbb{N}^k \to \mathbb{N}$ which is partial recursive in $f$; $e$ is again called an \emph{index} of $\varphi_e^{(k),f}$, while $f$ is called an \emph{oracle} of $\varphi_e^{(k),f}$. 

$\turingleq$ denotes Turing reducibility while $\turingeq$ denotes Turing equivalence. $\hypleq$ denotes hyperarithmetical reducibility while $\hypeq$ denots hyperarithmetical equivalence. For $X \in \cantor$, $X'$ denotes the Turing jump of $X$ and $\mathcal{O}^X$ denotes the hyperjump of $X$. $\mathcal{O}$ denotes Kleene's $\mathcal{O}$. For $f,g \in \baire$, their \emph{join} $f \oplus g \in \baire$ is defined by $(f \oplus g)(2n) = f(n)$ and $(f\oplus g)(2n+1) = g(n)$.

$P_e$ denotes the $e$-th $\Pi^0_1$ set $\{ f \in \baire \mid \varphi_e^{(1),f}(0) \converge\} \subseteq \baire$. $P_e^\ast$ denotes the $e$-th $\Sigma^1_1$ class $\{ X \in \cantor \mid \exists f~ (f \oplus X \in P_e) \}$.

\section{A Basis Theorem for \texorpdfstring{$\Sigma^1_1$}{Sigma11} Classes}

The following theorem includes the Gandy Basis Theorem \cite[Theorem III.1.4, pg. 54]{sacks1990higher}, the Kreisel Basis Theorem for $\Sigma^1_1$ Classes \cite[Theorem III.7.2, pg. 75]{sacks1990higher}, and Macintyre's Hyperjump Inversion Theorem \cite[Theorem 3, pg. 9]{macintyre1977transfinite}. 

\begin{thm} \label{gandy-kreisel basis theorem}
	Suppose $K \subseteq \cantor$ is an uncountable $\Sigma^1_1$ class and $Z$ and $A$ are reals such that $Z \oplus \mathcal{O} \turingleq A$ and $0 \hyple Z$. Then there exists $B \in K$ such that 
	\begin{equation*}
		A \turingeq \mathcal{O}^B \turingeq B \oplus \mathcal{O}
	\end{equation*}
	and $Z \hypnleq B$.
\end{thm}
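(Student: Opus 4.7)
I will prove the theorem by forcing. Conditions are pairs $(\sigma, K')$ where $\sigma \in \{0,1\}^{<\omega}$ and $K' \subseteq K \cap [\sigma]$ is an uncountable $\Sigma^1_1$ class, ordered by inclusion in both coordinates. By the Perfect Set Theorem for $\Sigma^1_1$ sets, uncountability of a $\Sigma^1_1$ class is a $\Sigma^1_1$-property of its index, hence decidable from $\mathcal{O}$ and therefore from $A$; thus every query made during the construction can be answered recursively in $A$. I build a descending sequence $(\sigma_s, K_s)_{s \geq 0}$ starting at $(\langle\rangle, K)$ with $|\sigma_s| \to \infty$, yielding $B = \bigcup_s \sigma_s \in K$ with $B \turingleq A$.

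Three families of requirements are interleaved. For $A \turingleq \mathcal{O}^B$, at an encoding stage I use uncountability of $K_s$ to find (recursively in $\mathcal{O}$) incompatible $\tau_0, \tau_1 \supseteq \sigma_s$ with both $K_s \cap [\tau_i]$ uncountable $\Sigma^1_1$, and then extend to $\tau_{A(s)}$, writing the bit $A(s)$ into $B$ in a pattern recoverable from $B \oplus \mathcal{O}$. For $\mathcal{O}^B \turingleq A$, at the stage for index $e$ I ask $A$ whether $K_s \cap P_e^\ast$ is uncountable: if yes, refine to $K_s \cap P_e^\ast$ and force $B \in P_e^\ast$; if no, the $\Sigma^1_1$ set $K_s \cap P_e^\ast$ is countable, with uniformly hyperarithmetic elements enumerable recursively in $\mathcal{O}$, so I add each to a growing forbidden list and avoid them at later stages by extending $\sigma_s$ off each forbidden real (always possible since an uncountable $\Sigma^1_1$ class cannot be concentrated on a single real). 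For $Z \hypnleq B$, I diagonalize: for each hyperarithmetic functional $\Phi$, I seek $\tau \supseteq \sigma_s$ with $K_s \cap [\tau]$ uncountable and $n$ with $\Phi^\tau(n) \converge \neq Z(n)$, or else refine $K_s$ so $\Phi^B(n) \diverge$ for some $n$, either way yielding $\Phi^B \neq Z$.

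The crux and main obstacle is the diagonalization step, and it is where $0 \hyple Z$ is essential. Suppose both diagonalization strategies fail for some $\Phi$ at stage $s$: then for every $\tau \supseteq \sigma_s$ with $K_s \cap [\tau]$ uncountable and every $n$, $\Phi^\tau(n)$ either diverges or satisfies $\Phi^\tau(n) = Z(n)$; and for every $n$ some such $\tau$ makes $\Phi^\tau(n) \converge$. Together these give, uniformly in $n$ and $k \in \{0,1\}$, that $Z(n) = k$ iff there exists $\tau \supseteq \sigma_s$ with $K_s \cap [\tau]$ uncountable and $\Phi^\tau(n) \converge = k$, a $\Sigma^1_1$ condition, forcing $Z \in \Delta^1_1$ and contradicting $0 \hyple Z$. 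Secondary subtleties are the coordination of $A$-encoding, $P_e^\ast$-decisions, and forbidden-list avoidances, which are handled at separate stage types; the forbidden-list mechanism is needed precisely because $K_s \setminus P_e^\ast$ is not itself $\Sigma^1_1$. Finally, $\mathcal{O}^B \turingeq B \oplus \mathcal{O}$, equivalently $\omega_1^B = \omega_1^{\ck}$, follows in the Gandy Basis Theorem style from the $\Sigma^1_1$ character of the forcing: the generic $B$ meets every dense set recursive in $\mathcal{O}$, including those forcing each $B$-recursive linear order of type $\geq \omega_1^{\ck}$ to be non-well-founded.
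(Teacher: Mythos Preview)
Your overall strategy is close to the paper's, but there are two genuine gaps.

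The first and most serious is that you never ensure $B=\bigcup_s\sigma_s$ actually lies in $\bigcap_s K_s$, or even in $K$. A $\Sigma^1_1$ class is not closed, so having each $K_s\cap[\sigma_s]$ uncountable does not imply the limit point $B$ belongs to any $K_s$; for instance, with $K_s=\{X:\exists n>s\ X(n)=1\}$ and $\sigma_s=0^s$ every $K_s\cap[\sigma_s]$ is uncountable yet $B=0^\omega$ lies in none of them. The paper confronts this head-on: alongside $\sigma_n$ it maintains a system of witness strings $\tau_{n,k}$ approximating reals $g_k$ with $B\oplus g_k\in P_k$ for each $\Sigma^1_1$ index $k$ it has committed to, and extends \emph{all} of them simultaneously at every stage (the ``intersection system'' and the function $\ext$). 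Without this or an equivalent mechanism, your $B$ need not be in $K$ at all, and then your $P_e^\ast$-deciding stages and forbidden-list machinery decide nothing about $B$.

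The second gap is in the diagonalization against $Z\hypleq B$. You write ``$\Phi^\tau(n)$'' for a hyperarithmetic functional $\Phi$ and a finite string $\tau$, but hyperarithmetic reductions are not continuous: there is no meaningful evaluation of such a $\Phi$ on a finite string, so neither ``seek $\tau$ with $\Phi^\tau(n)\converge\neq Z(n)$'' nor the fallback $\Sigma^1_1$ definition of $Z$ parses. The paper instead parameterizes reductions as $Y\mapsto(\varphi_f^{H_b^Y})_e$ for $b\in\mathcal{O}$ and indices $e,f$, and at the relevant stage asks whether there exist $Y_1,Y_2$ in the current class and some $m$ with $\varphi_f^{H_b^{Y_1}}(2^e3^m)\neq\varphi_f^{H_b^{Y_2}}(2^e3^m)$; this is an $\mathcal{O}$-decidable nonemptiness question about a $\Sigma^1_1$ class. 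If yes, refine to the side disagreeing with $Z(m)$; if no, the putative value of $Z(m)$ is constant over the class, giving a $\Sigma^1_1$ definition of $Z$ and of its complement, hence $Z\in\Delta^1_1$, contradicting $0\hyple Z$. Your endgame is right but the objects are wrong: replace ``$\Phi^\tau$'' by quantification over $Y$ in the current $\Sigma^1_1$ class. One further simplification you are missing: rather than arguing genericity to get $\omega_1^B=\omega_1^{\ck}$, the paper first restricts $K$ (via the Gandy Basis Theorem) to its uncountable $\Sigma^1_1$ subclass $\{Y\in K:\omega_1^Y=\omega_1^{\ck}\}$, so that $B\in K$ yields this for free and the diagonalization only needs to range over $b\in\mathcal{O}$ rather than $b\in\mathcal{O}^B$.
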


To prove \cref{gandy-kreisel basis theorem} we use Gandy-Harrington forcing (first introduced by Harrington in an unpublished manuscript \cite{harrington1976powerless}; see, e.g., \cite[Theorem IV.6.3, pg. 108]{sacks1990higher}), forming a descending sequence of uncountable $\Sigma^1_1$ classes
\begin{equation*}
	K = K_0 \supseteq K_1 \supseteq \cdots \supseteq K_n \supseteq \cdots 
\end{equation*}
where an element of the intersection $\bigcap_{n=0}^\infty{K_n}$ has the desired property.  Unlike in the case of $\Pi^0_1$ subsets of $\cantor$, compactness cannot be used to easily show that the intersection $\bigcap_{n=0}^\infty{K_n}$ is nonempty. Instead, some care must be taken to show that this is the case.

\begin{prop} \label{basic facts about sigma11 sets}  
	\mbox{}
	\begin{enumerate}[(a)]
		\item Given a $\Sigma^1_1$ predicate $K \subseteq \cantor \times \mathbb{N}^k$, there is a primitive recursive function $f\colon \mathbb{N}^k \to \mathbb{N}$ such that
		\begin{equation*}
			P_{f(x_1,\ldots,x_k)}^\ast(X) \equiv K(X,x_1,\ldots,x_k).
		\end{equation*}
		\item Suppose $X \in \cantor$. Then $\{ e \in \mathbb{N} \mid X \notin P_e^\ast\} \turingeq \mathcal{O}^X$.
		
		\item $\{ e \in \mathbb{N} \mid P_e^\ast = \emptyset\} \turingeq \mathcal{O}$.
	\end{enumerate}
\end{prop}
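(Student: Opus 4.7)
The plan is to prove (a) first, since it supplies the uniform indexing machinery for $\Sigma^1_1$ predicates that then drives parts (b) and (c); each of the latter reduces to combining (a) with Spector's $\Pi^1_1$-completeness of the hyperjump.

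For (a), I start with the Kleene normal form for $\Sigma^1_1$ predicates: there is a fixed index $e_0$ and a standard recursive coding $\vec{x} \mapsto \langle \vec{x}\rangle$ of tuples as reals such that
\begin{equation*}
K(X, \vec{x}) \iff \exists h \in \baire\ \varphi_{e_0}^{(1), h \oplus X \oplus \langle \vec{x}\rangle}(0) \converge.
\end{equation*}
The $s$-$m$-$n$ theorem then supplies a primitive recursive $f$ satisfying
\begin{equation*}
\varphi_{f(\vec{x})}^{(1), g}(0) \converge \iff \varphi_{e_0}^{(1), g \oplus \langle \vec{x}\rangle}(0) \converge,
\end{equation*}
and specializing $g = h \oplus X$ and unwinding the definition of $P^\ast_{f(\vec{x})}$ yields $P^\ast_{f(\vec{x})}(X) \iff K(X, \vec{x})$.

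For (b), observe first that $X \notin P_e^\ast$ unfolds to $\forall h \in \baire\ \varphi_e^{(1), h \oplus X}(0) \diverge$, which is $\Pi^1_1$ uniformly in $(X, e)$; thus $\{e : X \notin P_e^\ast\} \manyoneleq \mathcal{O}^X$ by Spector's $\Pi^1_1(X)$-completeness of the hyperjump. For the converse, apply (a) to the $\Sigma^1_1$ predicate $K(Y, e) \iff e \notin \mathcal{O}^Y$, obtaining a primitive recursive $\rho$ with $P^\ast_{\rho(e)}(Y) \iff e \notin \mathcal{O}^Y$; specializing $Y = X$, the map $e \mapsto \rho(e)$ many-one reduces $\mathcal{O}^X$ to $\{e : X \notin P_e^\ast\}$. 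For (c), $P_e^\ast = \emptyset$ unfolds to $\forall g \in \baire\ \varphi_e^{(1), g}(0) \diverge$, which is $\Pi^1_1$ in $e$, so $\{e : P_e^\ast = \emptyset\} \manyoneleq \mathcal{O}$; applying (a) to the $\Sigma^1_1$ predicate $K(Y, e) \iff e \notin \mathcal{O}$ (vacuous in $Y$) yields primitive recursive $\rho$ such that $P^\ast_{\rho(e)}$ equals $\cantor$ or $\emptyset$ according as $e \notin \mathcal{O}$ or $e \in \mathcal{O}$, so $e \mapsto \rho(e)$ many-one reduces $\mathcal{O}$ to $\{e : P_e^\ast = \emptyset\}$.

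The only substantive point is (a), and there the sole subtlety is confirming that the normal-form argument combined with $s$-$m$-$n$ delivers a primitive recursive (and not merely recursive) index function; this is built into the standard formulation of $s$-$m$-$n$. Parts (b) and (c) are then pure uniformity exercises using Spector's completeness, so no serious obstacle is anticipated.
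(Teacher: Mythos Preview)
Your argument is correct and is precisely the standard route: Kleene normal form plus $s$-$m$-$n$ for (a), then Spector's $\Pi^1_1$-completeness of $\mathcal{O}^X$ (resp.\ $\mathcal{O}$) together with (a) for the two directions of (b) and (c). The paper's own proof is simply the word ``Straight-forward,'' so your write-up supplies exactly the routine verification the authors omit.
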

\begin{proof}
	Straight-forward.
\end{proof}

\begin{cor}
	There exist primitive recursive functions $v$, $u$, and $U$ such that that for all $n,m \in \mathbb{N}$ and $\sigma,\tau \in N^\ast$ and $I \in \mathcal{P}_\mathrm{fin}(\mathbb{N})$,
	\begin{align*}
		P^\ast_{v(n,m)} & = P_n^\ast \cap P_m^\ast, \\
		P_{u(e,\sigma,\tau)}^\ast = P^\ast_e[\sigma,\tau]  & = \{ X \in \cantor \mid \sigma \subset X \wedge \exists g ~( X \oplus g \in P_e \wedge \tau \subset g) \}, \\
		P^\ast_{U(I,\sigma,\langle \tau_0,\ldots, \tau_{n-1}\rangle)} & = \bigcap_{k \in I \wedge k < n}{P_k^\ast[\sigma,\tau_k]}.
	\end{align*}
\end{cor}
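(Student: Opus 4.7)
The plan is to obtain each of $v$, $u$, and $U$ by a direct application of part (a) of \cref{basic facts about sigma11 sets} to appropriately chosen $\Sigma^1_1$ predicates, after fixing standard primitive recursive codings of finite strings $\sigma,\tau \in \mathbb{N}^\ast$, of finite sets $I \in \mathcal{P}_\fin(\mathbb{N})$, and of tuples $\langle \tau_0,\ldots,\tau_{n-1}\rangle$ of such strings, so that all parameters can be regarded as natural numbers.

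First, for $v$, consider the predicate $K_v(X,n,m) \equiv P_n^\ast(X) \wedge P_m^\ast(X)$. Unpacking the definition of $P_e^\ast$ from the introduction, each conjunct is $\Sigma^1_1$ in $X$ and recursive in $n$, $m$, so their conjunction is a $\Sigma^1_1$ predicate of $(X,n,m)$. Part (a) of the preceding proposition then yields a primitive recursive $v$ with $P^\ast_{v(n,m)}(X) \equiv K_v(X,n,m)$, as required. For $u$, take
\begin{equation*}
K_u(X,e,\sigma,\tau) \equiv \sigma \subset X \wedge \exists g\,(X \oplus g \in P_e \wedge \tau \subset g),
\end{equation*}
where $\sigma \subset X$ and $\tau \subset g$ are recursive conditions in their arguments, and $X \oplus g \in P_e$ is $\Pi^0_1$ in $(X,g,e)$; the existential quantifier over $g \in \baire$ then makes $K_u$ a $\Sigma^1_1$ predicate of $(X,e,\sigma,\tau)$. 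Part (a) supplies a primitive recursive $u$ with $P_{u(e,\sigma,\tau)}^\ast(X) \equiv K_u(X,e,\sigma,\tau)$. Finally, for $U$, let
\begin{equation*}
K_U(X,I,\sigma,\langle \tau_0,\ldots,\tau_{n-1}\rangle) \equiv \bigwedge_{k \in I \wedge k < n} K_u(X,k,\sigma,\tau_k),
\end{equation*}
which is a primitive recursively bounded finite conjunction of $\Sigma^1_1$ predicates and is therefore itself $\Sigma^1_1$ uniformly in its parameters. Invoking part (a) a third time provides the desired primitive recursive $U$.

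The only mild subtlety — which I would flag but not grind through — is the bookkeeping needed to verify that these constructions really are primitive recursive in the coding, i.e.\ that the bounded conjunction in $K_U$ is effected by a primitive recursive manipulation of indices (for instance, iterating $v$ over the finitely many $k \in I$ with $k < n$, after applying $u$ to each $(k,\sigma,\tau_k)$). Since $v$, $u$, and the operations of recursion on the length of $I$ are all primitive recursive, this is routine; equivalently, one can avoid the iteration altogether by applying part (a) directly to $K_U$ as displayed above, which is the cleanest route and is the one I would take.
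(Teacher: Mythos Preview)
Your argument is correct and is exactly the intended one: the paper states this corollary without proof, treating it as an immediate consequence of part (a) of \cref{basic facts about sigma11 sets}, which is precisely what you invoke. The only content is verifying that each predicate is $\Sigma^1_1$ uniformly in its numerical parameters, and you have done this.
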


\begin{prop} \label{extending solution and witnesses} \label{extending solutions and witnesses} \label{incompatible solutions}
	The following partial functions are $\mathcal{O}$-recursive:
	\begin{enumerate}[(a)]
		\item The partial function $\rho(\sigma,e) \simeq \langle \sigma_0,\sigma_1\rangle$ where $\sigma_0,\sigma_1$ are minimal incompatible extensions of $\sigma$ which have extensions in $P_e^\ast$ and $\sigma_0$ is lexicographically less than $\sigma_1$, whenever $\sigma$ has at least two extensions in $P_e^\ast$, otherwise diverging.
		\item The partial function $\ext(\langle e_1,\ldots, e_N\rangle, \sigma, \langle \tau_1, \ldots, \tau_N\rangle) \simeq (\tilde{\sigma}, \langle \tilde{\tau_1}, \ldots, \tilde{\tau_N}\rangle)$ where $(\tilde{\sigma}, \langle \tilde{\tau},\ldots,\tilde{\tau_N}\rangle)$ is the lexicographically least pair such that 
		\begin{enumerate}[1.]
			\item $\sigma \subsetneq \tilde{\sigma}$ and $\tau_k \subsetneq \tilde{\tau_k}$ for $1 \leq k \leq N$ and
			\item $\bigcap_{k=1}^N{P_{e_k}^\ast[\tilde{\sigma}, \tilde{\tau_k}]} \neq \emptyset$
		\end{enumerate}
		whenever $\bigcap_{k=1}^N{P_{e_k}^\ast[\sigma, \tau_k]} \neq \emptyset$, otherwise diverging.
	\end{enumerate}
\end{prop}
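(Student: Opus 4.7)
The plan is to reduce both parts to emptiness queries on $\Sigma^1_1$ classes, which by \cref{basic facts about sigma11 sets}(c) are decidable from $\mathcal{O}$. The corollary preceding the proposition supplies primitive recursive functions $u$, $v$, $U$ that produce $\Sigma^1_1$ indices for the localizations $P^\ast_e[\sigma,\tau]$, pairwise intersections, and $U$-style combined intersections. Each step of the algorithms below amounts to computing such an index primitive recursively and making an $\mathcal{O}$-query on its emptiness.

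For part (a), I would compute $\rho(\sigma,e)$ by descending the binary tree above $\sigma$. Starting with $\sigma' := \sigma$, I query $\mathcal{O}$, via the index $u(e,\sigma'\concat\langle i\rangle,\langle\rangle)$, to decide for each $i\in\{0,1\}$ whether $P_e^\ast[\sigma'\concat\langle i\rangle,\langle\rangle]$ is nonempty. If both are nonempty, I output $\langle\sigma'\concat\langle 0\rangle,\sigma'\concat\langle 1\rangle\rangle$; if exactly one (at bit $i^*$) is, I set $\sigma':=\sigma'\concat\langle i^*\rangle$ and iterate; if neither is, I diverge. When $\sigma$ has at least two extensions in $P_e^\ast$, the tree $T = \{\sigma''\supseteq\sigma : P^\ast_e[\sigma'',\langle\rangle]\neq\emptyset\}$ contains two distinct infinite branches and hence a splitting node, so the descent halts at the shortest splitting node, yielding the lex-least pair of minimal incompatible extensions. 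Otherwise no splitting node exists in $T$ and the procedure diverges, either by descending forever along the unique $T$-branch or by hitting the ``neither'' case.

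For part (b), for any candidate $(\tilde\sigma, \langle\tilde\tau_1,\ldots,\tilde\tau_N\rangle)$ I form a primitive recursive $\Sigma^1_1$ index for $\bigcap_{k=1}^N P^\ast_{e_k}[\tilde\sigma,\tilde\tau_k]$ (by iterating $u$ and $v$, or directly via $U$), and then $\mathcal{O}$-decide its emptiness. I enumerate candidates with $\sigma\subsetneq\tilde\sigma$ and $\tau_k\subsetneq\tilde\tau_k$ in a computable well-ordering refining the intended lex order on the tuple, and output the first one whose intersection is nonempty. Under the hypothesis that $\bigcap_{k=1}^N P^\ast_{e_k}[\sigma,\tau_k]$ is nonempty, any $X$ in it together with $\Sigma^1_1$ witnesses $g_1,\ldots,g_N$ supplies a valid candidate $\tilde\sigma = X\restrict(|\sigma|+1)$, $\tilde\tau_k = g_k\restrict(|\tau_k|+1)$, so the search terminates. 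The one delicate point is designing the enumeration order in (b) so that the first successful hit is genuinely lex-least among all witnessing tuples rather than merely the first encountered; once this is arranged, everything else is routine application of the preceding corollary and \cref{basic facts about sigma11 sets}(c).
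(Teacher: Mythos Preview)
Your proposal is correct and follows the same approach as the paper: reduce everything to $\mathcal{O}$-decidable emptiness queries on $\Sigma^1_1$ classes via the index functions $u$, $v$, $U$.

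The one place worth comparing is your ``delicate point'' in part~(b). The paper sidesteps the enumeration-order issue entirely by restricting attention to \emph{one-bit} extensions: it first finds the least $i\in\{0,1\}$ with $\bigcap_k P^\ast_{e_k}[\sigma\concat\langle i\rangle,\tau_k]\neq\emptyset$, then the lex-least $(j_1,\ldots,j_N)\in\{0,1\}^N$ with $\bigcap_k P^\ast_{e_k}[\sigma\concat\langle i\rangle,\tau_k\concat\langle j_k\rangle]\neq\emptyset$. Your own termination argument (take $\tilde\sigma=X\restrict(|\sigma|+1)$, $\tilde\tau_k=g_k\restrict(|\tau_k|+1)$) already shows that a one-bit extension always works whenever any proper extension does, and since shorter strings are lex-below their extensions, the overall lex-least tuple must be a one-bit extension. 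So rather than searching for a ``computable well-ordering refining lex'' on all tuples of strings (which does not exist, since lex on $\{0,1\}^\ast$ is not a well-order), you can simply check the finitely many one-bit extensions in lex order. This is exactly what the paper does, and it turns your flagged delicacy into a non-issue.
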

\begin{proof} \mbox{}
	\begin{enumerate}[(a)]
		\item Using $\mathcal{O}$, search for the first string $\nu$ such that $P_e^\ast[\sigma \concat \nu \concat \langle i \rangle, \langle\rangle] \neq \emptyset$ for $i=0,1$. Once such $\nu$ has been found, $\rho(\sigma,e) \converge = \langle \sigma \concat \nu \concat \langle 0\rangle, \sigma \concat \nu \concat \langle 1 \rangle \rangle$.
		
		\item Using $\mathcal{O}$, search for the first of $i=0,1$ for which $\bigcap_{k=1}^N{P_{e_k}^\ast[\sigma \concat \langle i\rangle, \tau_k]} \neq \emptyset$, then search for the lexicographically least $\langle j_1,\ldots, j_N\rangle \in \{0,1\}^N$ such that $\bigcap_{k=1}^N{P_{e_k}^\ast[\sigma \concat \langle i\rangle, \tau_k \concat \langle j_k\rangle]} \neq \emptyset$. If no such $i$ or $j_1,\ldots, j_N$ are found, then diverge. Otherwise, $\ext(\langle e_1,\ldots, e_N\rangle, \sigma, \langle \tau_1,\ldots, \tau_N\rangle) \converge = (\sigma \concat \langle i\rangle, \langle \tau_1 \concat \langle j_1\rangle, \ldots, \tau_N \concat \langle j_N\rangle)$.
	\end{enumerate}
\end{proof}

Let $\rho_0,\rho_1$ be defined by 
\begin{equation*}
	\rho(\sigma,e) \simeq \langle \rho_0(\sigma,e), \rho_1(\sigma,e)\rangle.
\end{equation*}

We use the ordinal notation description of $\mathcal{O}$ (and, more generally, $\mathcal{O}^Y$ for $Y \in \cantor$) described in \cite{sacks1990higher} and use the following well-known lemma to describe hyperarithmetical reducibility in terms of $H$-sets.

\begin{notation*}
	For $X \in \cantor$ and $n \in \mathbb{N}$, define
	\begin{equation*}
		(X)_n \coloneq \{ x \in \mathbb{N} \mid 2^n \cdot 3^x \in X\}.
	\end{equation*}
\end{notation*}

\begin{lem} \label{hyperarithmetical reducibility in terms of cross-sections}
	Suppose $X$ and $Y$ are reals in $\cantor$. Then $X \hypleq Y$ if and only if there exists $b \in \mathcal{O}^Y$ and $n \in \mathbb{N}$ such that $X = (H_b^Y)_n$.
\end{lem}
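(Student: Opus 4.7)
My plan is to prove the two directions of the biconditional separately, with the forward direction being the more substantial one.

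For $(\Leftarrow)$: Suppose $X = (H_b^Y)_n$ for some $b \in \mathcal{O}^Y$ and $n \in \mathbb{N}$. Then $X$ is computable from $H_b^Y$ via the map $x \mapsto 2^n \cdot 3^x$, so $X \turingleq H_b^Y$. By the construction of the $H$-hierarchy, $H_b^Y \hypleq Y$. Composing these, $X \hypleq Y$, which is one half of the biconditional.

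For $(\Rightarrow)$: Suppose $X \hypleq Y$. I would first invoke the standard Kleene characterization of hyperarithmetic reducibility (see, e.g., \cite{sacks1990higher}): there exists $c \in \mathcal{O}^Y$ such that $X \turingleq H_c^Y$, and so there is an index $e$ with $\varphi_e^{(1),H_c^Y}$ equal to the characteristic function $\chi_X$. I would then construct a limit notation $b \in \mathcal{O}^Y$ of the form $b = 3 \cdot 5^{e'}$ above $c$, using the recursion theorem to design the fundamental sequence $\varphi_{e'}^{(1),Y}$ so that, after the pairing $(n,x) \mapsto 2^n \cdot 3^x$ used in defining $H_b^Y$ at a limit stage, the cross-section $(H_b^Y)_n$ at some specific index $n$ equals $X$ literally, with the reduction $\varphi_e$ absorbed into the clause defining that stage of the join.

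The main obstacle will be verifying two points simultaneously: that the candidate $b$ is a legitimate element of $\mathcal{O}^Y$ (the fundamental sequence must be total and return strictly $\mathcal{O}^Y$-increasing notations below $b$), and that the chosen cross-section is literally equal to $X$ rather than merely Turing equivalent. Both depend on the specific conventions for the $H$-hierarchy and $\mathcal{O}^Y$ used in \cite{sacks1990higher}, and would follow by a careful application of the recursion theorem together with the uniformity of the clause that defines $H_{3 \cdot 5^{e'}}^Y$ as a recursive join. Beyond these bookkeeping points, the argument is routine.
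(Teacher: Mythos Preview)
Your backward direction is fine and essentially matches the paper's.

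For the forward direction, the paper takes a much simpler route than your proposed limit-notation construction: it passes to the \emph{successor} notation $2^c$ rather than building a limit notation $3\cdot 5^{e'}$. Under Sacks's conventions one has by definition
\[
H_{2^c}^Y=\{\,2^n3^x\mid \varphi_n^{(1),H_c^Y}(x)\converge\,\},
\]
so that $(H_{2^c}^Y)_f=\{x\mid \varphi_f^{(1),H_c^Y}(x)\converge\}$ for every $f$. Since $X\turingleq H_c^Y$ via some index $e$, one simply chooses $f$ with $\varphi_f^{(1),H_c^Y}(x)\converge\iff \varphi_e^{(1),H_c^Y}(x)\converge=1$, and then $(H_{2^c}^Y)_f=X$ on the nose. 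No recursion theorem is needed, and membership $2^c\in\mathcal{O}^Y$ is automatic.

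Your limit-notation plan, by contrast, faces a real obstacle rather than mere bookkeeping. For $b=3\cdot 5^{e'}$ the definition of $H_b^Y$ as a recursive join gives $(H_b^Y)_n=H_{\varphi_{e'}^{(1),Y}(n)}^Y$; that is, each cross-section is itself an $H$-set attached to a notation in the fundamental sequence. There is no slot in which to ``absorb'' the reduction $\varphi_e$: the values $\varphi_{e'}^{(1),Y}(n)$ must be genuine elements of $\mathcal{O}^Y$, strictly increasing under $<_{\mathcal{O}^Y}$, and their $H$-sets are then determined. An arbitrary $X\hypleq Y$ need not be literally equal to any $H_a^Y$, so you cannot in general realize $X$ as one of these cross-sections. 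The successor step is exactly where a free index parameter appears, and that is what the paper exploits.
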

\begin{proof}
	Suppose $X \hypleq Y$, so that there is $b \in \mathcal{O}^Y$ such that $X \turingleq H_b^Y$. Let $e$ be the index of such a Turing reduction, i.e., let $e$ be such that $X = \varphi_e^{(1),H_b^Y}$. By definition \cite{sacks1990higher}, $2^b \in \mathcal{O}^Y$ and 
	\begin{equation*}
		H_{2^b}^Y \coloneq \{ 2^n3^x \mid \varphi_n^{(1),H_b^Y}(x) \converge\}.
	\end{equation*}
	Let $f$ be an index such that
	\begin{equation*}
		\varphi_f^{(1),H_b^Y}(x) \converge \iff \varphi_e^{(1),H_b^Y}(x) \converge = 1
	\end{equation*}
	Then
	\begin{align*}
		(H_{2^b}^Y)_f & = \{ x \in \mathbb{N} \mid \varphi_f^{(1),H_b^Y}(x) \converge\} \\
		& = \{ x \in \mathbb{N} \mid \varphi_e^{(1),H_b^Y}(x) \converge = 1 \} \\
		& = X
	\end{align*}
	
	Conversely, suppose there is $b \in \mathcal{O}^Y$ and $n \in\mathbb{N}$ such that $X = (H_b^Y)_n$. Let $e$ be an index such that
	\begin{equation*}
		\varphi_e^{(1),Z}(x) = \begin{cases} 1 & \text{if $x\in (Z)_n$} \\ 0 & \text{if $x \notin (Z)_n$} \end{cases}
	\end{equation*}
	for any $Z \in \cantor$. Then $\varphi_e^{(1),H_b^Y} = X$, showing that $X \turingleq H_b^Y$.
\end{proof}

\begin{proof}[Proof of \cref{gandy-kreisel basis theorem}.]
	By the Gandy Basis Theorem \cite[Theorem III.1.4, pg. 54]{sacks1990higher}, assume without loss of generality that $\omega_1^Y = \omega_1^\ck$ for all $Y \in K$. 
	
	In order to control the hyperjump $\mathcal{O}^B$, we choose $B$ to be an element of an intersection of $\Sigma^1_1$ subsets
	\begin{equation*}
		K = K_0 \supseteq K_1 \supseteq \cdots \supseteq K_n \supseteq \cdots.
	\end{equation*}
	
	In order for $B$ to be an element of $K_n = P_{j(n)}^\ast$ for each $n$, there must be $g_n \in \baire$ such that $B \oplus g_n \in P_{j(n)}$, where $j(n)$ is some index of $K_n$. Such $g_n$ depend on $B$. Thus, we additionally define sequences of strings
	\begin{equation*}
		\begin{array}{ccccccccc}
			\sigma_0 & \subseteq & \sigma_1 & \subseteq & \cdots & \subseteq & \sigma_n & \subseteq & \cdots \\
			\tau_{0,0} & \subseteq & \tau_{1,0} & \subseteq & \cdots & \subseteq & \tau_{n,0} & \subseteq & \cdots \\
			\tau_{0,1} & \subseteq & \tau_{1,1} & \subseteq & \cdots & \subseteq & \tau_{n,0} & \subseteq & \cdots \\
			\tau_{0,2} & \subseteq & \tau_{1,2} & \subseteq & \cdots & \subseteq & \tau_{n,0} & \subseteq & \cdots \\
			\vdots & & \vdots & & \ddots & & \vdots & & \ddots
		\end{array}
	\end{equation*}
	so that $B = \bigcup_{n\in \omega}{\sigma_n}$ and $g_k = \bigcup_{n\in\omega}{\tau_{n,k}}$. 
	We also define a sequence of finite subsets of $\mathbb{N}$
	\begin{equation*}
		I_0 \subseteq I_1 \subseteq \cdots \subseteq I_n \subseteq \cdots
	\end{equation*}
	encoded as finite sequences $\{e_1,\ldots,e_N\} \mapsto \langle e_1,\ldots,e_N\rangle$ which keep track of the indices $e$ of $\Sigma^1_1$ classes we have committed to intersecting, so that $K_n = \bigcap_{k\in I_n}{P_k^\ast[\sigma_n,\tau_{n,k}]}$. 
	A function $j\colon \mathbb{N} \to \mathbb{N}$ keeps track of the index of $K_n$, i.e., 
	\begin{equation*}
		K_n = P_{j(n)}^\ast.
	\end{equation*}
	In the course of the proof, we assume that $j$ encodes all of the information from previous steps (i.e., a course-of-value computation) though we avoid making this precise to ease the burden of notation. 
	
	To ease in the notation and exposition, we set the following temporary definitions. An \emph{intersection system} consists of the following data:
	\begin{enumerate}[(i)]
		\item a finite subset $I \subseteq \mathbb{N}$, 
		\item a string $\sigma$, and
		\item a sequence of strings $\langle \tau_k \mid k \in I\rangle$
	\end{enumerate}
	subject to the constraint that $\bigcap_{k \in I}{P_k^\ast[\sigma,\tau_k]}$ is nonempty. If $k \notin I$, then we assign the value $\langle\rangle$ to $\tau_k$.
	
	By \emph{adding $P_e^\ast$ to the intersection system $I,\sigma,\langle \tau_k \mid k \in I\rangle$}, we mean the following procedure, where $K = \bigcap_{k \in I}{P_k^\ast[\sigma,\tau_k]}$:
	\begin{description}
		\item[Case $1$: $K \cap P_e^\ast = \emptyset$.] Let $\tilde{I} = I$, $\tilde{K} = K$, $\tilde{\sigma} = \sigma$, and $\tilde{\tau_k} = \tau_k$ for each $k$.
		\item[Case $2$: $K \cap P_e^\ast \neq \emptyset$.] Let $\tilde{I} = I \cup \{e\}$, and let $\tilde{\sigma}$ and, simultaneously for all $k \in \tilde{I}$, $\tilde{\tau_k}$ be the lexicographically least proper extensions of $\sigma$ and $\tau_k$, respectively, such that $\bigcap_{k \in \tilde{I}}{P_k^\ast[\tilde{\sigma},\tilde{\tau_k}]} \neq \emptyset$.
	\end{description}
	The resulting intersection system is $\tilde{I},\tilde{\sigma}, \langle \tilde{\tau_k} \mid k \in \tilde{I}\rangle$. Note that from $I,\sigma,\langle \tau_k \mid k \in I\rangle$ and $e$, the new intersection system $\tilde{I},\tilde{\sigma},\langle \tilde{\tau_k} \mid k \in \tilde{I}\rangle$ can be determined in a uniform way recursively in $\mathcal{O}$: representing $I$ as $\langle e_1,\ldots, e_N\rangle$ and writing $e_{N+1} = e$, then 
	\begin{align*}
		\tilde{I} & = \begin{cases} \langle e_1,\ldots, e_N, e_{N+1}\rangle & \text{if $K \cap P_e^\ast \neq \emptyset$,} \\ I & \text{otherwise,} \end{cases} \\
		(\tilde{\sigma}, \langle \tilde{\tau_k} \mid k \in \tilde{I}\rangle) & = 
		\begin{cases} 
			\ext(\tilde{I}, \sigma, \langle \tau_{e_1},\ldots, \tau_{e_N}, \langle\rangle\rangle) & \text{if $K \cap P_e^\ast \neq \emptyset$,} \\ 
			(\sigma, \langle \tau_k \mid k \in I\rangle) & \text{otherwise.} 
		\end{cases}
	\end{align*}
	In particular, the index $U(\tilde{I},\tilde{\sigma}, \langle \tilde{\tau_k} \mid k < \max I\rangle)$ of $\tilde{K}$ can be determined uniformly from the intersection system $I, \sigma, \langle \tau_k \mid k \in I\rangle$ using $\mathcal{O}$ as an oracle.
	
	Now we proceed with the construction. As $K$ is $\Sigma^1_1$, there is $e_0$ such that $K = P_{e_0}^\ast$. 
	\begin{description}
		\item[Stage $n=0$:] Define
		\begin{equation*}
			K_0 \coloneq K, \qquad \sigma_0 \coloneq \langle\rangle, \qquad \tau_{0,k} \coloneq \langle\rangle, \qquad j(0) \coloneq e_0, \qquad I_0 \coloneq \{e_0\}.
		\end{equation*}
		Note that $P_{j(0)}^\ast = K_0 = \bigcap_{k\in I_0}{P_k^\ast[\sigma_0,\tau_{0,k}]}$.
		
		\item[Stage $n= 3e+1$:] Let $I_n, \sigma_n, \langle \tau_{n,k} \mid k \in I_n\rangle$ be the result of adding $P_e^\ast$ to the intersection system $I_{n-1}, \sigma_{n-1}, \langle \tau_{n-1,k} \mid k \in I_{n-1}\rangle$, and let $K_n \coloneq \bigcap_{k\in I_n}{P_k^\ast[\sigma_n,\tau_{n,k}]}$ and $j(n)$ be an index for $K_n$.
		
		\item[Stage $n=3e+2$:] At this stage we encode $A(e)$ into $B$. 
		
		By construction, 
		\begin{equation*}
			P_{j(n-1)}^\ast = K_{n-1} = \bigcap_{k \in I_{n-1}}{P_k^\ast[\sigma_{n-1},\tau_{n-1,k}]} \neq \emptyset.
		\end{equation*}
		As $K_{n-1}$ is uncountable, there are infinitely many pairwise-incompatible extensions of $\sigma_{n-1}$ which extend to elements of $K_{n-1}$. Thus, let
		\begin{equation*}
			\sigma_n \coloneq \rho_{A(e)}(\sigma_{n-1},j(n-1)).
		\end{equation*}
		Define
		\begin{align*}
			K_n & \coloneq \bigcap_{k \in I_{n-1}}{P_k^\ast[\sigma_n,\tau_{n-1,k}]} = P_{U(\sigma_n,I_{n-1},\langle \tau_{n-1,0},\ldots,\tau_{n-1,n-1}\rangle)}, \\
			\tau_{n,k} & \coloneq \tau_{n-1,k}, \qquad (\text{for all $k$}) \\
			I_n & \coloneq I_{n-1}, \\
			j(n) & \coloneq U(\sigma_n,I_{n-1},\langle \tau_{n-1,0},\ldots,\tau_{n-1,n-1}\rangle).
		\end{align*}
		
		\item[Stage $n = 3^{b+1} \cdot 5^e \cdot 7^f$:] Suppose $b \in \mathcal{O}$. Let $m \in \mathbb{N}$ be the least natural number for which there are $Y_1,Y_2 \in K_{n-1}$ such that $\varphi_{f}^{(1),H_b^{Y_1}}(2^e\cdot 3^m)$ and $\varphi_{f}^{(1),H_b^{Y_2}}(2^e \cdot 3^m)$ are both defined and unequal. For $i \in \{0,1\}$, let
		\begin{equation*}
			K_{n-1}^i = \{ Y \in K_{n-1} \mid \varphi_{f}^{(1),H_b^{Y_1}}(2^e \cdot 3^m) \converge = i\}.
		\end{equation*}
		Because $K^0_{n-1} \cap K^1_{n-1} = \emptyset$, there is a least $k \in \mathbb{N}$ and $i \in \{0,1\}$ such that $\{ Y \in K_{n-1}^0 \mid Y(k) = i \}$ and $\{ Y \in K_{n-1}^1 \mid Y(k) \neq i\}$ are nonempty. Let $i_0 = i$ and $i_1 = 1-i$.
		
		Let $I_n,\sigma_n,\langle \tau_{n,k} \mid k \in I_n\rangle$ be the result of adding the (uniformly in $b$, $e$, $f$, $m$, $k$,and $i$, given $Z(m)$) $\Sigma^1_1$ class $\{ Y \in \cantor \mid \varphi_{f}^{(1),H_b^Y}(2^e \cdot 3^m)\converge \neq Z(m) \wedge Y(k) \neq i_{Z(m)}\}$ to the intersection system $I_{n-1},\sigma_{n-1},\langle \tau_{n-1,k} \mid k \in I_{n-1}\rangle$, and let $K_n \coloneq \bigcap_{k\in I_n}{P_k^\ast[\sigma_n,\tau_{n,k}]}$ and $j(n)$ be an index for $K_n$. 
		
		If $b \notin \mathcal{O}$ or no such $m$ exists, do nothing, i.e., let 
		\begin{equation*}
			K_n \coloneq K_{n-1},\qquad \sigma_n \coloneq \sigma_{n-1}, \qquad \tau_{n,k} \coloneq \tau_{n-1,k}, \qquad j(n) \coloneq j(n-1), \qquad I_n \coloneq I_{n-1}.
		\end{equation*}
		
		\item[All Other Stages $n$:] Do nothing, i.e., let
		\begin{equation*}
			K_n \coloneq K_{n-1},\qquad \sigma_n \coloneq \sigma_{n-1}, \qquad \tau_{n,k} \coloneq \tau_{n-1,k}, \qquad j(n) \coloneq j(n-1), \qquad I_n \coloneq I_{n-1}.
		\end{equation*}
	
	\end{description}
	
	This completes the construction.
	
	Define
	\begin{equation*}
		B \coloneq \bigcup_{n\in\mathbb{N}}{\sigma_n} \quad \text{and} \quad g_k \coloneq \bigcup_{n\in\mathbb{N}}{\tau_{n,k}}.
	\end{equation*}
	We start by claiming $B \in \bigcap_{n\in\mathbb{N}}{K_n}$: by construction, for $k \in \bigcap_{n\in\mathbb{N}}{I_n}$, we have $B \oplus g_k \in P_k$, showing $B \in P_k^\ast$. Additionally, by construction $B \in P_k^\ast[\sigma_n,\tau_{n,k}]$ for every $n$ and every $k \in \bigcap_{n\in\mathbb{N}}{I_n}$, so $B \in \bigcap_{k\in I_n}{P_k^\ast[\sigma_n,\tau_{n,k}]} = K_n$. Thus, $B \in \bigcap_{n\in\mathbb{N}}{K_n}$. In particular, $B \in K_0 = K$, so $\omega_1^B = \omega_1^\ck$.
	
	If $Z \hypleq B$, then \cref{hyperarithmetical reducibility in terms of cross-sections} shows there are $c \in \mathcal{O}^B$ and $e \in \mathbb{N}$ such that $Z = (H_b^B)_e$. Because $\omega_1^B = \omega_1^\ck$, there exists $b \in \mathcal{O}$ such that $|b|=|c|$ and hence $H_b^B \turingeq H_c^B$ by Spector's Uniqueness Theorem \cite[Corollary II.4.6, pg. 40]{sacks1990higher}. Let $f$ be an index such that $\varphi_f^{(1),H_b^B} = H_c^B$, so that $Z = (\varphi_f^{(1),H_b^B})_e$. By construction, at Stage $n=3^{b+1}\cdot 5^e \cdot 7^f$ it must have been the case that no $m$ and $Y_1,Y_2 \in K_{n-1}$ existed with $\varphi_f^{(1),H_b^{Y_1}}(2^e\cdot 3^m)$ and $\varphi_f^{(1),H_b^{Y_2}}(2^e\cdot 3^m)$ both defined and unequal. In particular, $\varphi_f^{(1),H_b^B}$ is a $\Sigma^1_1$ singleton, and so hyperarithmetical. But then $H_c^B \turingeq H_b^B$ is hyperarithmetical, hence $Z = (H_c^B)_e$ is hyperarithmetical, a contradiction. Thus, $Z \nhypleq B$.
	
	We now make the following observations: assuming $j(n-1)$ is known (and utilizing the implicit course-of-values procedure to yield $I_{n-1}, \sigma_{n-1}, \langle \tau_{n-1,k}\rangle_{k\in\mathbb{N}}$), then\ldots
	\begin{description}
		\item \ldots in Stage $n=3e+1$, the determination of $I_{n},\sigma_{n}, \langle \tau_{n,k}\rangle_{k\in\mathbb{N}}$ (and hence also $j(n)$) is recursive in $\mathcal{O}$ by \cref{extending solutions and witnesses}.
		
		\item \ldots in Stage $n=3e+2$, the determination of $I_n, \sigma_n, \langle \tau_{n,k}\rangle_{k\in\mathbb{N}}$ (and hence also $j(n)$) is recursive in $A$ (by construction) or $B \oplus \mathcal{O}$ (by determining the unique $i$ for which $\rho_i(\sigma_{n-1},j(n-1)) \subset B$) by \cref{incompatible solutions}.
		
		\item \ldots in Stage $n=3^{b+1}\cdot 5^e \cdot 7^f$, the determination of $I_n, \sigma_n, \langle \tau_{n,k}\rangle_{k\in\mathbb{N}}$ (and hence also $j(n)$) is recursive in $B \oplus \mathcal{O}$ (the determination of whether $b \in \mathcal{O}$ and whether there exists an $m$ and $Y_1, Y_2 \in K_{n-1}$ for which $\varphi_f^{(1),H_b^{Y_1}}(2^e\cdot 3^m)$ and $\varphi_f^{(1),H_b^{Y_2}}(2^e\cdot 3^m)$ are both defined and unequal may be performed recursively in $\mathcal{O}$ since it corresponds to checking whether a particular $\Sigma^1_1$ class is nonempty, and once the least such $m$ is found, we may determine the least $k$ and $i \in \{0,1\}$ for which $\{ Y \in K_{n-1}^0 \mid Y(k) = i\}$ and $\{ Y \in K_{n-1}^1 \mid Y(k) = 1-i\}$ are nonempty; finally, checking whether $B(k) = i$ or $B(k) = 1-i$ determines whether we intersected $\{ Y \in \cantor \mid \varphi_f^{(1),H_b^Y}(2^e\cdot 3^m) \downarrow = 0 \wedge Y(k) = i\}$ or $\{ Y \in \cantor \mid \varphi_f^{(1),H_b^Y}(2^e\cdot 3^m) \downarrow = 1 \wedge Y(k) = 1-i\}$, respectively) or $A$ (as before, the determination of whether $b \in \mathcal{O}$ and of the existence of such an $m$ may be done recursively in $\mathcal{O} \turingleq A$, and $Z \turingleq A$).
		
		\item \ldots in all other Stages $n$, the determination of $I_n,\sigma_n, \langle \tau_{n,k}\rangle_{k\in\mathbb{N}}$ (and hence also $j(n)$) is recursive.
	\end{description}
	In particular, $j \turingleq A$ and $j \turingleq B \oplus \mathcal{O}$.
	
	We make the following final observations:
	\begin{itemize}
		\item $A \turingleq j \oplus \mathcal{O}$ as $A(e)=i$ if and only if $j(n) = U(\rho_i(\sigma_{n-1},j(n-1)),I_{n-1},\langle \tau_{n-1,0},\ldots,\tau_{n-1,n-1}\rangle)$, where $n=3e+2$.
		
		\item $\mathcal{O}^B \turingleq j \oplus \mathcal{O}$ as $B \in P_e^\ast$ if and only if $v(j(n-1),e) \notin \{i \mid P_i^\ast =\emptyset \} \turingeq \mathcal{O}$. The determination $v(j(n-1),e) \notin \{i \mid P_i^\ast =\emptyset \} \turingeq \mathcal{O}$ can be made recursively in $j \oplus \mathcal{O}$.		
	\end{itemize}
	Thus, we find that 
	\begin{equation*}
		A \turingleq j \oplus \mathcal{O} \turingleq B \oplus \mathcal{O} \turingleq \mathcal{O}^B \turingleq j \oplus \mathcal{O} \turingleq A
	\end{equation*}
	so we have Turing equivalence throughout.
\end{proof}

The following corollary is originally due to Macintyre \cite[Theorem 3, pg. 9]{macintyre1977transfinite}.

\begin{cor}
	Suppose $A$ is a real such that $\mathcal{O} \turingleq A$. Then there exists $B$ such that
	\begin{equation*}
		A \turingeq \mathcal{O}^B \turingeq B \oplus \mathcal{O}.
	\end{equation*}
\end{cor}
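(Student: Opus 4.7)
The plan is to derive this corollary as an essentially immediate specialization of \cref{gandy-kreisel basis theorem}. The idea is to drop the additional parameter $Z$ from the main theorem by choosing it to be as benign as possible, and to drop the basis constraint $B \in K$ by taking $K$ to be the largest possible uncountable $\Sigma^1_1$ class, namely all of $\cantor$.

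Concretely, I would apply \cref{gandy-kreisel basis theorem} with the uncountable $\Sigma^1_1$ class $K = \cantor$ and with the real $Z = \mathcal{O}$. To do this I need to check the two numerical hypotheses. First, $Z \oplus \mathcal{O} = \mathcal{O} \oplus \mathcal{O} \turingeq \mathcal{O}$, and by assumption $\mathcal{O} \turingleq A$, so $Z \oplus \mathcal{O} \turingleq A$. Second, $0 \hyple \mathcal{O}$ is the standard fact that Kleene's $\mathcal{O}$ is non-hyperarithmetical (as a $\Pi^1_1$-complete set). Both hypotheses being trivially satisfied, the theorem produces some $B \in \cantor$ with
\begin{equation*}
A \turingeq \mathcal{O}^B \turingeq B \oplus \mathcal{O},
\end{equation*}
which is exactly the conclusion required. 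The additional clause $\mathcal{O} \hypnleq B$ delivered by the theorem is automatic here anyway and is not needed for the statement.

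There is no genuine obstacle in this argument; the only point to double-check is that $K = \cantor$ really is (trivially) $\Sigma^1_1$ and uncountable, so that \cref{gandy-kreisel basis theorem} is applicable without any further work. All of the real content, including the use of Gandy--Harrington forcing and the control over $\mathcal{O}^B$, is absorbed into the theorem we are invoking.
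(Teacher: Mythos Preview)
Your proposal is correct and is precisely the intended derivation: the paper states this corollary without proof immediately after \cref{gandy-kreisel basis theorem}, and the proof of the very next corollary (\cref{gandy-kreisel basis theorem corollary}) confirms that the intended specialization is exactly to take $Z = \mathcal{O}$ (there with $A = \mathcal{O}$ as well), using that $\cantor$ is an uncountable $\Sigma^1_1$ class. There is nothing to add.
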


The following corollary is ``folklore'', being unpublished but known to researchers and stated in \cite[Exercise 2.5.6, pg. 40]{chong2015recursion} without proof or references. Other than \cite[Exercise 2.5.6, pg. 40]{chong2015recursion} we have not seen any statement of \cref{gandy-kreisel basis theorem corollary} in the literature.

\begin{cor} \label{gandy-kreisel basis theorem corollary}
	Suppose $K$ is a nonempty $\Sigma^1_1$ class. Then there exists $B \in K$ such that $\mathcal{O} \turingeq \mathcal{O}^B \turingeq B \oplus \mathcal{O}$.
\end{cor}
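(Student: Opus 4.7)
The plan is to case-split on the cardinality of $K$.

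If $K$ is countable, the classical perfect set property for $\Sigma^1_1$ sets forces every element of $K$ to be hyperarithmetical. In detail, the set $\text{HYP}$ of hyperarithmetical reals is $\Pi^1_1$, so $K' \coloneq K \setminus \text{HYP}$ is $\Sigma^1_1$; if $K'$ were nonempty it would have to contain a perfect subset (since all its elements are non-hyperarithmetical), contradicting that $K$, and hence $K'$, is countable. So every $B \in K$ is hyperarithmetical. For such $B$ we have $B \turingleq \mathcal{O}$, whence $B \oplus \mathcal{O} \turingeq \mathcal{O}$; and since $B$ is $\Delta^1_1$, the set $\mathcal{O}^B$ is $\Pi^1_1(B)$-complete with $\Pi^1_1(B) = \Pi^1_1$, so $\mathcal{O}^B$ is $\Pi^1_1$-complete and therefore $\turingeq \mathcal{O}$.

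If $K$ is uncountable, I would apply \cref{gandy-kreisel basis theorem} with the specific choice $Z = A = \mathcal{O}$. The hypotheses are met trivially: $Z \oplus \mathcal{O} \turingeq \mathcal{O} = A$, and $0 \hyple \mathcal{O}$ since Kleene's $\mathcal{O}$ is not hyperarithmetical. The conclusion immediately produces $B \in K$ with $\mathcal{O} \turingeq \mathcal{O}^B \turingeq B \oplus \mathcal{O}$, which is exactly the statement of the corollary. The auxiliary conclusion $\mathcal{O} \hypnleq B$ is automatic from $\mathcal{O} \turingeq \mathcal{O}^B$, since $\mathcal{O}^B$ is $\Pi^1_1(B)$-complete and therefore never hyperarithmetic in $B$.

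There is no substantive obstacle. The only point worth noting --- and perhaps mildly counterintuitive --- is that $Z = A = \mathcal{O}$ is a legitimate choice in the main theorem: the constraint $0 \hyple Z$ is a hyperarithmetical (not Turing) statement, so $Z$ need not be Turing above $\mathcal{O}$, and in fact $\mathcal{O}$ itself is simultaneously above $0$ hyperarithmetically and Turing-equivalent to the prescribed $A$.
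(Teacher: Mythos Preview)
Your proof is correct and follows essentially the same approach as the paper: a case-split on whether $K$ is countable, with the uncountable case handled by applying \cref{gandy-kreisel basis theorem} at $Z = A = \mathcal{O}$ and the countable case handled by observing that every element is hyperarithmetical. You simply give more detail in the countable case than the paper, which only cites \cite[Theorem III.6.2, pg. 72]{sacks1990higher} for the fact that countable $\Sigma^1_1$ classes have only hyperarithmetical members.
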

\begin{proof}
	If $K$ is uncountable, then we apply \cref{gandy-kreisel basis theorem} with $Z = A = \mathcal{O}$. 
	
	If $K$ is countable, then its elements are hyperarithmetical \cite[Theorem III.6.2, pg. 72]{sacks1990higher} and so any $B \in K$ satisfies $\mathcal{O} \turingeq \mathcal{O}^B \turingeq B \oplus \mathcal{O}$.
\end{proof}

We can generalize \cref{gandy-kreisel basis theorem}, replacing the real $Z$ by a sequence of reals, as follows.

\begin{thm} \label{gandy-kreisel basis theorem multiple reals}
	Suppose $K$ is an uncountable $\Sigma^1_1$ class and $Z$ and $A$ are reals such that $Z \oplus \mathcal{O} \turingleq A$ and $0 \hyple (Z)_k$ for each $k \in \mathbb{N}$. Then there exists $B \in K$ such that 
	\begin{equation*}
		A \turingeq \mathcal{O}^B \turingeq B \oplus \mathcal{O}
	\end{equation*}
	and $(Z)_k \hypnleq B$ for all $k$.
\end{thm}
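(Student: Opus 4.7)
The plan is to run essentially the same construction as in the proof of \cref{gandy-kreisel basis theorem}, modifying only the diagonalization stages so that every cross-section $(Z)_k$ is handled. First, I would fix a recursive coding of quadruples $(k,b,e,f) \in \mathbb{N}^4$ into stage numbers disjoint from those used for the other kinds of stages, for instance by assigning stage $n = 11^{k+1} \cdot 3^{b+1} \cdot 5^e \cdot 7^f$ to the requirement $(Z)_k \neq (\varphi_f^{(1),H_b^B})_e$. The stages $n = 3e+1$ (adding $P_e^\ast$ to the intersection system, to control $\mathcal{O}^B$) and $n = 3e+2$ (encoding $A(e)$ into $B$ via $\rho_{A(e)}$) are kept exactly as before, and all other stages do nothing.

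At each new diagonalization stage, I would repeat verbatim the argument from Stage $n = 3^{b+1}\cdot 5^e \cdot 7^f$ of \cref{gandy-kreisel basis theorem}, but with the single bit $(Z)_k(m)$ replacing $Z(m)$. That is, if $b \in \mathcal{O}$, search for the least $m$ and $Y_1,Y_2 \in K_{n-1}$ such that $\varphi_f^{(1),H_b^{Y_1}}(2^e \cdot 3^m)$ and $\varphi_f^{(1),H_b^{Y_2}}(2^e \cdot 3^m)$ are both defined and unequal; split $K_{n-1}$ accordingly into $K_{n-1}^0$ and $K_{n-1}^1$, find the least position $p$ and bit $i$ at which they split, and add the $\Sigma^1_1$ class
\begin{equation*}
\{\, Y \in \cantor \mid \varphi_f^{(1),H_b^Y}(2^e \cdot 3^m) \converge \neq (Z)_k(m) \wedge Y(p) \neq i_{(Z)_k(m)} \,\}
\end{equation*}
to the intersection system. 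Since $(Z)_k \turingleq Z \turingleq A$ uniformly in $k$, the stage is recursive in $A$, and it is also recursive in $B \oplus \mathcal{O}$ by the same ``read off $B(p)$ to recover the branch'' argument as before.

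The verifications of $A \turingleq j \oplus \mathcal{O} \turingleq B \oplus \mathcal{O} \turingleq \mathcal{O}^B \turingleq j \oplus \mathcal{O} \turingleq A$ go through with no essential change, because the new diagonalization stages satisfy exactly the same uniformity properties as the original ones. The nontriviality lies in the diagonalization verification: if $(Z)_k \hypleq B$ for some $k$, then \cref{hyperarithmetical reducibility in terms of cross-sections} gives $c \in \mathcal{O}^B$ and $e$ with $(Z)_k = (H_c^B)_e$; since $\omega_1^B = \omega_1^\ck$ (preserved by applying the Gandy Basis Theorem to $K$ at the outset, just as in \cref{gandy-kreisel basis theorem}), Spector's Uniqueness Theorem yields $b \in \mathcal{O}$ with $|b| = |c|$ and an index $f$ with $\varphi_f^{(1),H_b^B} = H_c^B$. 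Then at stage $n = 11^{k+1}\cdot 3^{b+1}\cdot 5^e \cdot 7^f$ no splitting pair $Y_1,Y_2$ could have existed, which forces $\varphi_f^{(1),H_b^B}$ to be a $\Sigma^1_1$ singleton and hence hyperarithmetical, contradicting $0 \hyple (Z)_k$.

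The main obstacle is really just bookkeeping: making sure the new stage numbers are disjoint from those used in \cref{gandy-kreisel basis theorem} and that the hypothesis $0 \hyple (Z)_k$ is invoked exactly once, at the very end of the diagonalization argument for each fixed $k$. Because each diagonalization stage touches only one cross-section of $Z$, the requirements for different $k$ do not interfere, and the construction remains uniform in $\mathcal{O}$ and $A$ throughout.
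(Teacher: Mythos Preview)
Your proposal is correct and follows essentially the same approach as the paper: the paper's entire proof is the single sentence ``The proof of \cref{gandy-kreisel basis theorem} may be adapted by replacing Stage $n=3^{b+1} \cdot 5^e \cdot 7^f$ with $n=3^{b+1} \cdot 5^e \cdot 7^f \cdot 11^k$ and replacing therein $Z$ with $(Z)_k$.'' Your stage numbering $11^{k+1}\cdot 3^{b+1}\cdot 5^e\cdot 7^f$ differs only cosmetically, and the detailed verification you supply is exactly what the paper leaves implicit.
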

\begin{proof}
	The proof of \cref{gandy-kreisel basis theorem} may be adapted by replacing Stage $n=3^{b+1} \cdot 5^e \cdot 7^f$ with $n=3^{b+1} \cdot 5^e \cdot 7^f \cdot 11^k$ and replacing therein $Z$ with $(Z)_k$.
\end{proof}

\section{Posner-Robinson for Turing Degrees of Hyperjumps}

\begin{thm}[Posner-Robinson for Turing Degrees of Hyperjumps] \label{posner-robinson for hyperjumps}
	Suppose $Z$ and $A$ are reals such that $Z \oplus \mathcal{O} \turingleq A$ and $0 \hyple Z$. Then there exists $B$ such that
	\begin{equation*}
		A \turingeq \mathcal{O}^B \turingeq B \oplus Z \turingeq B \oplus \mathcal{O}.
	\end{equation*}
\end{thm}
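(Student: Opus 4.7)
The plan is to modify the construction in the proof of \cref{gandy-kreisel basis theorem} to additionally arrange $\mathcal{O} \turingleq B \oplus Z$. Applied with $K = \cantor$, that theorem already produces $B$ satisfying $A \turingeq \mathcal{O}^B \turingeq B \oplus \mathcal{O}$ and $Z \hypnleq B$, and since the construction will still be $A$-recursive we automatically get $B \turingleq A$ and $B \oplus Z \turingleq A$; the missing reverse reduction $A \turingleq B \oplus Z$ then follows from $\mathcal{O} \turingleq B \oplus Z$ via $A \turingeq B \oplus \mathcal{O} \turingleq B \oplus (B \oplus Z) \turingeq B \oplus Z$.

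To force $\mathcal{O} \turingleq B \oplus Z$, fix the Turing functional $\Gamma^{B \oplus Z}(m) \coloneq B(\langle m, Z \restrict m\rangle)$. The positions $p_m \coloneq \langle m, Z \restrict m\rangle$ are pairwise distinct, $Z$-recursive, and grow exponentially in $m$, so ensuring $B(p_m) = \mathcal{O}(m)$ for every $m$ yields $\Gamma^{B \oplus Z} = \mathcal{O}$. I insert into the construction a new family of stages (say Stages $n = 13^{m+1}$, or any unused residue) at which the current uncountable $\Sigma^1_1$ class $K_{n-1}$ is replaced by $K_n \coloneq K_{n-1} \cap \{B \in \cantor : B(p_m) = \mathcal{O}(m)\}$. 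Since $\mathcal{O} \turingleq A$ and $Z \turingleq A$, a $\Sigma^1_1$ index for $K_n$ is uniformly $A$-recursive, which preserves the bookkeeping reduction $j \turingleq A$ of the original proof.

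The hard part is keeping $K_n$ nonempty and uncountable: the commitment $B(p_m) = \mathcal{O}(m)$ might in principle be incompatible with $K_{n-1}$ if prior $P_e^\ast$-intersections or $\rho$-extensions have frozen position $p_m$ to the opposite value. I handle this by scheduling the $m$-th Z-coding stage so that $p_m > |\sigma_{n-1}|$ (which is feasible because $p_m$ grows much faster than the cumulative extensions produced by the finitely many previous Gandy-Kreisel stages), and by applying the $\rho$-function of \cref{extending solutions and witnesses} iteratively beforehand to drive $\sigma$ into a sub-class of $K_{n-1}$ in which $p_m$ is a splitting position. Because an uncountable $\Sigma^1_1$ class contains a perfect subset, splittings above any given prefix are unbounded and $\mathcal{O}$-recursively locatable, so the resulting single-bit restriction at the chosen splitting halves $K_{n-1}$ without exhausting it.

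The verification then mirrors that of \cref{gandy-kreisel basis theorem}: the Gandy basis initialization guarantees $\omega_1^B = \omega_1^\ck$ (crucially, since $Z \hypnleq B$ the set $B$ cannot hyperarithmetically recover its $Z$-keyed positions $p_m$, so the $\mathcal{O}$-bits stored there do not leak into $B$ alone and $\omega_1^B$ is not boosted above $\omega_1^\ck$), whence $\mathcal{O}^B \turingleq B \oplus \mathcal{O} \turingeq A$; the original $3e+2$-stages and $3^{b+1} 5^e 7^f$-stages continue to force $A \turingleq j \oplus \mathcal{O} \turingleq B \oplus \mathcal{O}$ and $Z \hypnleq B$ as before; and the new $13^{m+1}$-stages deliver $\Gamma^{B \oplus Z} = \mathcal{O}$. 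Combining these gives the full chain $A \turingeq \mathcal{O}^B \turingeq B \oplus Z \turingeq B \oplus \mathcal{O}$, completing the theorem.
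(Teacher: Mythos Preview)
Your approach via Gandy--Harrington forcing is genuinely different from the paper's, which uses Kumabe--Slaman forcing, and unfortunately your argument has a real gap at the crucial step.

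The problem is your claim that you can ``drive $\sigma$ into a sub-class of $K_{n-1}$ in which $p_m$ is a splitting position.'' The positions $p_m = \langle m, Z\restrict m\rangle$ are fixed in advance by $Z$; you do not get to choose them. The function $\rho$ of \cref{extending solutions and witnesses} locates \emph{some} split in $K_{n-1}$, but gives no control over \emph{where} that split occurs, and there is no reason an arbitrary uncountable $\Sigma^1_1$ class must split at the particular bit $p_m$. Concretely: at an earlier Stage~$3e+1$ you may have intersected with a $P_e^\ast$ (a completely arbitrary $\Sigma^1_1$ class) every element of which satisfies $Y(p_m) = 1 - \mathcal{O}(m)$; your coding then fails at $m$. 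Scheduling does not rescue this, since a $\Sigma^1_1$ class with a small index $e$ can constrain bits at arbitrarily large positions, so you cannot process all infinitely many coding requirements before every dangerous $3e+1$ stage. One might try to repair matters by testing, at Stage~$3e+1$, whether $K_{n-1}\cap P_e^\ast \cap \{B:\forall m\,(B(p_m)=\mathcal{O}(m))\}$ is nonempty, but that intersection is only $\Sigma^1_1$ relative to $Z\oplus\mathcal{O}$, so deciding its emptiness needs $\mathcal{O}^A$ rather than $A$ and destroys the bookkeeping $j \turingleq A$.

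There is a second, related gap: at your new coding stages the index $j(n)$ depends on $p_m$, which requires $Z$ to compute. Hence $j$ is recursive only in $B\oplus\mathcal{O}\oplus Z$, not in $B\oplus\mathcal{O}$, so the reduction $A \turingleq j\oplus\mathcal{O} \turingleq B\oplus\mathcal{O}$ you invoke from the original proof no longer goes through. Both difficulties are precisely why the paper abandons Gandy--Harrington forcing here and instead builds $B$ as a Kumabe--Slaman--generic Turing functional $\Phi$ over a countable $\omega$-model $M$ with $Z,\mathcal{O}\notin M$: the key \cref{kumabe-slaman lemma} lets one meet every dense set in $M$ while adding no new computations along $Z$ or along $\mathcal{O}$, so that $\Phi(Z)$ and $\Phi(\mathcal{O})$ can be set independently to $\mathcal{O}^\Phi$ and $A$.
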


\cref{posner-robinson for hyperjumps} is essentially due to Slaman \cite{slaman2018email}. The rest of this section is devoted to a proof of \cref{posner-robinson for hyperjumps}. The key to the proof is a forcing notion known as Kumabe-Slaman forcing, which was originally introduced in \cite{shore1999defining}.

\subsection{Kumabe-Slaman Forcing}

In order to prove \cref{posner-robinson for hyperjumps}, we will use Turing functionals and an associated notion of forcing to construct the desired $B$.

\begin{definition}[Turing Functionals] \cite{shore1999defining, reimann2018effective}
	A \textbf{Turing functional} $\Phi$ is a set of triples $(x,y,\sigma) \in \mathbb{N} \times \{0,1\} \times \{0,1\}^\ast$ (called \textbf{computations in $\Phi$}) such that if $(x,y_1,\sigma_1), (x,y_2,\sigma_2) \in \Phi$ and $\sigma_1$ and $\sigma_2$ are compatible, then $y_1 = y_2$ and $\sigma_1 = \sigma_2$. 
	
	A Turing functional $\Phi$ is \textbf{use-monotone} if:
	\begin{enumerate}[(i)]
		\item For all $(x_1,y_1,\sigma_1)$ and $(x_2,y_2,\sigma_2)$ are elements of $\Phi$ and $\sigma_1 \subset \sigma_2$, then $x_1 < x_2$. 
		
		\item For all $x_1$ and $(x_2,y_2,\sigma_2) \in \Phi$ where $x_2 > x_1$, then there are $y_1$ and $\sigma_1$ such that $\sigma_1 \subseteq \sigma_2$ and $(x_1,y_1,\sigma_1) \in \Phi$.
	\end{enumerate}
\end{definition}

\begin{remark}
	Despite the terminology, a Turing functional $\Phi$ is not assumed to be recursive or even recursively enumerable.
\end{remark}

\begin{definition}[Computations along a Real] \cite{shore1999defining, reimann2018effective}
	Suppose $\Phi$ is a Turing functional and $X \in \cantor$. Then $(x,y,\sigma) \in \Phi$ is a \textbf{computation along $X$} if $\sigma \subset X$, in which case we write $\Phi(X)(x)=y$. If for every $x \in \mathbb{N}$ there exists $y \in \{0,1\}$ and $\sigma \subset X$ such that $(x,y,\sigma) \in \Phi$, then $\Phi(X)$ defines an element of $\cantor$ (otherwise it is a partial function). 
\end{definition}

\begin{lem} \label{turing functional of real is reducible to turing functional and real}
	Suppose $\Phi$ is a Turing functional, $X \in \cantor$, and $\Phi(X) \in \cantor$. Then 
	\begin{equation*}
		\Phi(X) \turingleq \Phi \oplus X.
	\end{equation*}
\end{lem}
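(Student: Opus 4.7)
The plan is to exhibit a direct Turing reduction from $\Phi(X)$ to $\Phi \oplus X$ by an unbounded search through the graph of $\Phi$. Given an input $x \in \mathbb{N}$, the reduction uses its $\Phi$-oracle to enumerate triples of the form $(x, y, \sigma) \in \Phi$ having first coordinate $x$, and uses its $X$-oracle to check, for each such triple in turn, whether $\sigma \subset X$; as soon as a triple passes both tests, the reduction outputs $y$.

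First I would check that the search always terminates. The hypothesis $\Phi(X) \in \cantor$ says that $\Phi(X)$ is a total element of Cantor space, which by the definition of computation along a real means precisely that for every $x \in \mathbb{N}$ there exists at least one $(x, y, \sigma) \in \Phi$ with $\sigma \subset X$. Hence the enumeration is guaranteed to produce some such triple.

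Next I would check that the answer returned is unambiguous, i.e., independent of which triple the search happens to find first. If $(x, y_1, \sigma_1)$ and $(x, y_2, \sigma_2)$ are both elements of $\Phi$ with $\sigma_1, \sigma_2 \subset X$, then $\sigma_1$ and $\sigma_2$ are compatible as strings, being initial segments of the same real $X$. The consistency clause in the definition of a Turing functional then forces $y_1 = y_2$ (and $\sigma_1 = \sigma_2$), so the output is well-defined and agrees with $\Phi(X)(x)$.

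I do not anticipate any genuine obstacle: the statement is essentially an unwinding of definitions, and notably neither the use-monotonicity condition nor any effectiveness assumption on $\Phi$ is required. The only subtle point is recognizing that the consistency clause, which a priori speaks about arbitrary compatible strings, applies to our two candidate uses precisely because they both sit as initial segments of $X$.
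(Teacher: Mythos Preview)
Your proposal is correct and is essentially the same approach as the paper's: the paper's proof consists of the single sentence ``Obvious from the definition of $\Phi(X)$,'' and what you have written is precisely the unwinding of that definition into an explicit search procedure, together with the observations that totality of $\Phi(X)$ guarantees termination and the consistency clause guarantees well-definedness.
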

\begin{proof}
	Obvious from the definition of $\Phi(X)$.
\end{proof}

\begin{definition}[Kumabe-Slaman Forcing] \cite{shore1999defining, reimann2018effective}
	Define the poset $(\mathbb{P},\leq)$ as follows:
	\begin{enumerate}[(i)]
		\item Elements of $\mathbb{P}$ are pairs $(\Phi,\mathbf{X})$ where $\Phi$ is a finite use-monotone Turing functional and $\mathbf{X}$ is a finite subset of $\cantor$. 
		
		\item If $p = (\Phi_p, \mathbf{X}_p)$ and $q = (\Phi_q, \mathbf{X}_q)$ are in $\mathbb{P}$, then $p \leq q$ if
		\begin{enumerate}[(a)]
			\item $\Phi_p \subseteq \Phi_q$ and for all $(x_q,y_q,\sigma_q) \in \Phi_q \setminus \Phi_p$ and all $(x_p,y_p,\sigma_p) \in \Phi_p$, the length of $\sigma_q$ is greater than the length of $\sigma_p$.
			\item $\mathbf{X}_p \subseteq \mathbf{X}_q$.
			\item For every $x$, $y$, and $X \in \mathbf{X}$, if $\Phi_q(X)(x) = y$, then $\Phi_p(X)(x) = y$.
		\end{enumerate}		
		In other words, a stronger condition than $p$ can add longer computations to $\Phi_p$, provided they don't apply to any element of $\mathbf{X}_p$.
	\end{enumerate}
\end{definition}

In the remainder of \S 3, we will be discussing Kumabe-Slaman forcing over countable $\omega$-models of \ZFC\footnote{Here \ZFC\ denotes Zermelo-Fraenkel Set Theory with the Axiom of Choice. However, for the purposes of this paper, our $\omega$-models need not satisfy \ZFC\ but only a small subsystem of \ZFC\ or actually of second-order arithmetic.}. Unlike in the forcing constructions in axiomatic set theory, it will be important here that the countable ground model $M$ is \emph{not} well-founded. We now introduce some conventions for discussing such models.

Let $M$ be a countable non-well-founded $\omega$-model of \ZFC. Let $\theta(x_1,\ldots,x_n)$ be a sentence in the language of \ZFC\ with parameters $x_1,\ldots,x_n$ from $M$. We write $\theta^M(x_1,\ldots,x_m)$ or $M \vDash \theta(x_1,\ldots,x_n)$ to mean that $\theta(x_1,\ldots,x_n)$ holds in $M$. In particular, $x_1 \in^M x_2$ means that $M \vDash x_1 \in x_2$, etc. We tacitly identify the natural number system of $M$ with the standard natural number system, the reals of $M$ with standard reals, etc. In particular, let $\mathbb{P}^M$ be the set of pairs $(\Phi,X)$ such that $M \vDash \text{``$(\Phi,X)$ is a Kumabe-Slaman forcing condition''}$. In this case, $\Phi$ is identified with a finite Turing functional, $X$ is identified with a finite set of reals belonging to $M$, etc., so $(\Phi,X)$ actually \emph{is} a Kumabe-Slaman forcing condition.

The key property of Kumabe-Slaman Forcing is the following:

\begin{lem}\cite[based on Lemma 3.10, pg. 23]{reimann2018effective} 
\label{kumabe-slaman lemma}
	Suppose $M$ is an $\omega$-model of $\mathsf{ZFC}$, $D \in M$ is dense in $\mathbb{P}^M$, and $X_1,\ldots, X_n \in \cantor$. Then for any $p \in \mathbb{P}^M$, there exists $q \geq p$ such that $q \in D$ and $\Phi_q$ does not add any new computations along any $X_k$. 
\end{lem}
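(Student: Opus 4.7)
My plan is to build, inside $M$, a ``bad-approximation tree'' $T$ whose absence of infinite paths will deliver the desired $q$. For each $\ell \in \mathbb{N}$ and each tuple $\vec{\tau} = (\tau_1, \ldots, \tau_n) \in (\{0,1\}^\ell)^n$, I would call $\vec{\tau}$ \emph{bad} if there is no $q \geq p$ in $D$ such that every new computation $(x,y,\sigma) \in \Phi_q \setminus \Phi_p$ has $\sigma$ incompatible with every $\tau_k$. Since this definition quantifies only over $\mathbb{P}^M \in M$, the predicate is absolute between $M$ and the real world, so $T := \{\vec{\tau} \mid \vec{\tau} \text{ bad}\}$ lives in $M$. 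A short string-combinatorial check shows $T$ is downward closed under componentwise prefix: if $q$ witnesses that $\vec{\tau}$ is good, the same $q$ witnesses goodness for every componentwise extension of $\vec{\tau}$, because a $\sigma$ incompatible with $\tau_k$ remains incompatible with any extension of $\tau_k$.

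The key step is to rule out infinite paths through $T$. Working inside $M$ and arguing by contradiction, suppose $T$ is infinite; since $T$ is finitely branching and $M \vDash \mathsf{ZFC}$, König's Lemma in $M$ produces $\tilde{X}_1, \ldots, \tilde{X}_n \in \cantor$ belonging to $M$ with $(\tilde{X}_1 \restrict \ell, \ldots, \tilde{X}_n \restrict \ell) \in T$ for every $\ell$. Form $p' := (\Phi_p, \mathbf{X}_p \cup \{\tilde{X}_1, \ldots, \tilde{X}_n\}) \in \mathbb{P}^M$, and use density of $D$ in $\mathbb{P}^M$ to pick some $q \geq p'$ with $q \in D$. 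Clause (c) of the ordering, together with the functional property of $\Phi_q$, forces every $(x,y,\sigma) \in \Phi_q \setminus \Phi_p$ to satisfy $\sigma \not\subset \tilde{X}_k$ for each $k$. Choosing $\ell$ larger than every use-length appearing in $\Phi_q$, each such $\sigma$ is then incompatible with $\tilde{X}_k \restrict \ell$, so $q$ itself witnesses that $(\tilde{X}_k \restrict \ell)_k$ is good — contradicting its membership in $T$. Hence $T$ is finite, bounded by some $\ell_0$, and for any $\ell > \ell_0$ the tuple $\vec{\tau} := (X_1 \restrict \ell, \ldots, X_n \restrict \ell)$ lies outside $T$. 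The resulting $q \geq p$ in $D$ has every new use $\sigma$ incompatible with each $X_k \restrict \ell$, so $\sigma$ cannot be an initial segment of any $X_k$, completing the proof.

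The main obstacle is the non-well-foundedness of $M$: the König's Lemma argument must be performed \emph{inside} $M$ so that the path reals $\tilde{X}_k$ actually belong to $M$ and may be legitimately adjoined to $\mathbf{X}_p$ to form a condition in $\mathbb{P}^M$. This is licensed by the absoluteness of the ``bad'' predicate, whose definition ranges only over the set $\mathbb{P}^M \in M$, so $M$ and the real world agree on which tuples are bad and hence on the tree $T$ to which König applies.
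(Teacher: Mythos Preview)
Your argument is correct and follows essentially the same route as the paper's proof: both define a tree of ``bad'' (the paper says ``essential'') $n$-tuples of equal-length strings, observe it is definable in $M$, use K\"onig's Lemma inside $M$ to obtain path reals $\tilde X_1,\ldots,\tilde X_n\in M$, adjoin these to the side condition, and derive a contradiction from density. The only cosmetic difference is that the paper wraps the whole argument in a single contradiction (assuming the conclusion fails for the given $X_k$, hence $(X_k\restrict m)_k$ lies in the tree for all $m$, hence the tree is infinite, etc.), whereas you first prove unconditionally that $T$ is finite and then read off the conclusion for the particular $X_k$; your structuring is arguably cleaner, and your explicit remark on absoluteness (needed because $M$ is non-well-founded) makes precise what the paper leaves as ``definable in $M$.''
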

\begin{proof}
	Suppose $p = (\Phi_p, \mathbf{X}_p) \in \mathbb{P}^M$. Say that an $n$-tuple of strings $\vec{\tau}$ is \emph{essential} for $(p,D)$ if $q>p$ and $q\in D$ implies the existence of $(x,y,\sigma) \in \Phi_q \setminus \Phi_p$ such that $\sigma$ is compatible with some component of $\vec{\tau}$, i.e., any extension of $p$ in $D$ adds a computation along a string compatible with a component of $\vec{\tau}$. Being essential for $(p,D)$ is definable in $M$.
	
	Define
	\begin{equation*}
		T_n(p,D) \coloneq \{ \vec{\tau} \in (\{0,1\}^\ast)^n \mid \text{$\vec{\tau}$ is essential for $(p,D)$ and $|\tau_1| = \cdots = |\tau_n|$}\}.
	\end{equation*}
	Being essential for $(p,D)$ is closed under taking (component-wise) initial segments, so $T_n(p,D)$ is a finitely branching tree in $M$.
	
	Suppose for the sake of a contradiction that for every $q>p$, either $q \notin D$ or else $q$ adds a new computation along some $X_k$. We claim that $\langle X_1 \restrict m,\ldots, X_n \restrict m\rangle$ is essential for $(p,D)$ for all $m \in \mathbb{N}$. Given $q > p$ with $q \in D$, by hypothesis there is some computation $(x,y,\sigma) \in \Phi_q \setminus \Phi_p$ along some $X_k$. This means that $\sigma \subset X_k$ (outside of $M$), so $\sigma$ is compatible with $X_k \restrict m$. 
	
	This shows that $T_n(p,D)$ is infinite. As $M$ is a model of $\mathsf{ZFC}$, it follows that $T_n(p,D)$ has a path through it. The requirement that the components of any element of $T_n(p,D)$ are of the same length implies that such a path is of the form $(Y_1,\ldots,Y_n)$ for $Y_1,\ldots,Y_n \in M \cap \cantor$. 
	
	Consider $p_1 = (\Phi_p, \mathbf{X}_p \cup \{ Y_1,\ldots,Y_n\})$. Suppose $q \geq p_1$ and $q \in D$. Each $n$-tuple $\langle Y_1 \restrict m, \ldots, Y_n \restrict m\rangle$ is essential for $(p,D)$ for each $m$, so there exists $(x_m,y_m,\sigma_m) \in \Phi_q \setminus \Phi_p$ such that $\sigma_m$ is compatible with $Y_k \restrict m$ for some $k$. As $\Phi_q$ is finite, letting $m$ be sufficiently large shows that there is $(x,y,\sigma) \in \Phi_q \setminus \Phi_p$ for which $\sigma$ is an initial segment of $Y_k$ for some $k$. However, this is not possible since $q \leq p_1$ implies $Y_k \in \mathbf{X}_q$. This provides the needed contradiction.
\end{proof}

Suppose $G$ is an $M$-generic filter for $\mathbb{P}^M$. Then for every $X$
\begin{equation*}
	X \subseteq^M \mathbb{N} \iff \text{there is $p \in G$ with $X \in \mathbf{X}_p$}
\end{equation*}
since for any $X \subseteq^M \mathbb{N}$, the set $\{ p \in \mathbb{P}^M \mid (\emptyset, \{X\}) \leq p\}^M$ is a dense open subset of $\mathbb{P}^M$ in $M$. Thus, the essential parts of an $M$-generic filter $G$ are the Turing functionals $\Phi_p$ for $p \in G$.

\begin{definition}
	A Turing functional $\Phi$ is \textbf{$M$-generic for $\mathbb{P}^M$} if and only if there exists an $M$-generic filter $G$ such that
	\begin{equation*}
		(x,y,\sigma) \in \Phi \iff \text{there exists $p \in G$ such that $(x,y,\sigma) \in^M \Phi_p$}.
	\end{equation*}
\end{definition}

$\Phi$ may be identified with an element $(\dot{\Phi})_G$ in $M[G]$, where
\begin{equation*}
	M \vDash \dot{\Phi} = \{ (p,\dot{c}) \mid p \in \mathbb{P}^M \wedge c \in \Phi_p\}
\end{equation*}
and $\dot{c}$ is a \emph{canonical name} for $c \in M$, defined by transfinite recursion in $M$ to be the unique element in $M$ for which
\begin{equation*}
	M \vDash \dot{c} = \mathbb{P}^M \times \{ \dot{b} \mid b \in c \}.
\end{equation*}

By defining an $M$-generic Turing functional $\Phi$ for $\mathbb{P}^M$ by means of approximations, \cref{kumabe-slaman lemma,,hyperarithmetical iff in every omega model} allow us to meet dense sets without affecting $\Phi(Z)$, which can then be arranged independently.

\subsection{Proof of Posner-Robinson for Hyperjumps}

Now we proceed with the proof of \cref{posner-robinson for hyperjumps}:

\begin{lem} \label{gandy-kreisel basis theorem omega-model avoiding sequence of nonhyp reals} \label{hyperarithmetical iff in every omega model}
	Suppose $Z$ and $A$ are reals such that $Z \oplus \mathcal{O} \turingleq A$ and $0 \hyple Z$. Then there exists a (code for a) countable $\omega$-model $M$ of $\mathsf{ZFC}$ such that $\mathcal{O}^M \turingeq A$ and $Z \notin M$.
\end{lem}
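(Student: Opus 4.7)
The plan is to apply the Gandy-Kreisel basis theorem (\cref{gandy-kreisel basis theorem}) to the class of codes of countable $\omega$-models of a suitable finite fragment $T$ of $\ZFC$ (the footnote permits this weakening). I would code a countable structure as a real $B \in \cantor$ that encodes a binary relation $E_B$ on $\omega$, write $M_B := (\omega, E_B)$, and take $K := \{ B \in \cantor \mid M_B \text{ is an } \omega\text{-model of } T \}$.

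The first step is to verify that $K$ is $\Sigma^1_1$ and uncountable. That $M_B \models T$ is arithmetic in $B$ is standard (satisfaction in a coded structure is arithmetic, and $T$ is finite). The $\omega$-model condition unfolds to
\[ (\forall a \in \omega)(a \in \omega^{M_B} \to (\exists n \in \omega)(a = n^{M_B})), \]
where $\omega^{M_B}$ and the map $n \mapsto n^{M_B}$ are uniformly recursive in $B$; every quantifier here ranges over $\omega$, so this is $\Pi^0_2$ in $B$ and $K$ is arithmetic. For uncountability, I would invoke the standard fact that for reasonable $T$ there exist $2^{\aleph_0}$ non-isomorphic countable $\omega$-models of $T$, yielding $2^{\aleph_0}$ codes in $K$.

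Next I would apply \cref{gandy-kreisel basis theorem} to $K$, $Z$, and $A$ to obtain $B \in K$ satisfying $A \turingeq \mathcal{O}^B \turingeq B \oplus \mathcal{O}$ and $Z \hypnleq B$. Set $M := M_B$, identified with the real $B$. Then $\mathcal{O}^M = \mathcal{O}^B \turingeq A$ is immediate. For $Z \notin M$, the key observation is that every real $r \in M$ is in fact Turing reducible to $B$: if $e \in \omega$ is the index of $r$ within $M$, then $n \in r \iff (n,e) \in E_B$ uniformly in $B$. The consequence $Z \nturingleq B$ (implied by $Z \hypnleq B$) therefore precludes any such $e$, giving $Z \notin M$.

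The only subtle step is the complexity computation for $K$: one might reflexively expect ``$\omega$-model'' to be genuinely $\Pi^1_1$ via well-foundedness of $\omega^{M_B}$, but for a coded countable structure of a theory that explicitly defines $\omega$, the condition reduces to ``no nonstandard naturals'', using only number quantifiers. Beyond that the proof is a direct specialization of the basis theorem, and the weaker consequence $Z \nturingleq B$ (rather than the full $Z \hypnleq B$ supplied by \cref{gandy-kreisel basis theorem}) is already exactly what is needed to force $Z \notin M$.
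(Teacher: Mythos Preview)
Your proposal is correct and takes essentially the same approach as the paper: apply \cref{gandy-kreisel basis theorem} to the $\Sigma^1_1$ class of codes of countable $\omega$-models. Your write-up is in fact more detailed than the paper's one-line proof, since you spell out both the complexity computation for $K$ (sharpened to arithmetic via the finite-fragment footnote) and the reason $Z \hypnleq B$ forces $Z \notin M$ (every real in $M$ is Turing-reducible to the code $B$), points the paper leaves implicit.
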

\begin{proof}
	The set of codes for countable $\omega$-models of $\mathsf{ZFC}$ is $\Sigma^1_1$, so the existence of a code of such an $M$ follows from \cref{gandy-kreisel basis theorem}.
\end{proof}

\begin{proof}[Proof of \cref{posner-robinson for hyperjumps}:] 
	The main idea of the proof is due to Slaman \cite{slaman2018email}.
	
	We shall construct an $M$-generic Turing functional $\Phi$ with $B = \Phi$ the desired real. Assume without loss of generality that no initial segment of $Z$ is an initial segment of $\mathcal{O}$. By arranging for $\Phi(Z) \in \cantor$ and $\Phi(Z) = \mathcal{O}^\Phi$ and $\Phi(\mathcal{O}) = A$, this will complete the proof. 
	
	By \cref{gandy-kreisel basis theorem omega-model avoiding sequence of nonhyp reals}, there exists a countable $\omega$-model $M$ of $\mathsf{ZFC}$ such that $\mathcal{O},Z \notin M$ and $\mathcal{O}^M \turingeq A$. Without loss of generality, $M = \langle \omega, E\rangle$.
	
	Let $D_0,D_1,D_2,\ldots$ be an enumeration, recursive in $A$, of the dense open subsets of $\mathbb{P}^M$ in $M$ ($M$ is countable and $\mathcal{O}^M \turingeq A$, so this is possible). To construct our $M$-generic $\Phi$, we approximate it by finite initial segments
	\begin{equation*}
		p_0 \leq p_1 \leq \cdots \leq p_n \leq \cdots.
	\end{equation*}
	During our construction, we alternate between meeting dense sets, arranging for $\Phi(\mathcal{O}) = A$, and arranging for $\Phi(Z) \turingeq \mathcal{O}^\Phi$.
	
	\begin{description}
		\item[Stage $n=0$:] Define $p_0 \coloneq (\emptyset,\emptyset)$. 
		
		\item[Stage $n=2^m$:] Suppose $p_{n-1}$ has been constructed. By \cref{kumabe-slaman lemma}, there exists $q \in D_n$ extending $p_{n-1}$ which does not add any new computations along $Z$ or $\mathcal{O}$. Let $p_n$ be the least such condition.
		
		\item[Stage $n = 2^m \cdot 3$:] We extend $p_{n-1}$ to $p_{n}$ by adding $(m,A(m),\sigma)$ where $\sigma \subset \mathcal{O}$ is a sufficiently long initial segment of $\mathcal{O}$ (i.e., the shortest initial segment of $\mathcal{O}$ which is longer than any existing strings in elements of $\Phi_{p_{n-1}}$).
		
		\item[Stage $n=2^m \cdot 5$:] Suppose $p_{n-1}$ has been constructed. By construction, there is no $y$ and $\sigma \subset Z$ such that $(m,y,\sigma) \in \Phi_{p_{n-1}}$. Now proceed as follows:
		\begin{description}
			\item[Substage 1:] Consider the set $D$ (in $M$) containig all $q \in \mathbb{P}^M$ such that one of the following conditions hold:
			\begin{enumerate}[(i)]
				\item $q \forces ( \text{$m$ encodes a ${{\Phi}}$-recursive linear order on $\omega$} \wedge m \in \mathcal{O}^{{{\Phi}}} \wedge \exists \alpha ~( \alpha \in \Ord^M \wedge |m| = \alpha))$,
				\item $q \forces (\text{$m$ encodes a ${{\Phi}}$-recursive linear order on $\omega$} \wedge m \notin \mathcal{O}^{{{\Phi}}})$, or
	\item $q \forces \neg (\text{$m$ encodes a ${{\Phi}}$-recursive linear order on $\omega$})$.
			\end{enumerate}
			$D$ is dense. By \cref{kumabe-slaman lemma}, there exists $q\in D$ extending $p_{n-1}$ which does not add any new computations along $Z$ or $\mathcal{O}$. Let $q$ be minimal with that property.
	
			\item[Substage 2:] Extend $q$ to $p_n$ by adding $(m,y,\sigma)$, where $\sigma \subset Z$ is a sufficiently long initial segment of $Z$ (i.e., the shortest initial segment of $Z$ which is longer than any existing strings in elements of $\Phi_q$) and $y$ depends on the following cases:
			\begin{description}
				\item[Case 1:] If $q \forces (\text{$m$ encodes a ${{\Phi}}$-recursive linear order on $\omega$} \wedge m \in \mathcal{O}^{{{\Phi}}} \wedge \exists \alpha ~( \alpha \in \Ord^M \wedge |m| = \alpha))$, then we break into two subcases:
				\begin{description}
					\item[Case 1a:] If $\alpha$ is in the standard part of $\Ord^M$, then $\alpha$ is actually an ordinal and $m$ \emph{does} encode a ${{\Phi}}$-recursive linear order on $\omega$. Thus, set $y \coloneq 1$.
	
					\item[Case 1b:] If $\alpha$ is not in the standard part of $\Ord^M$, then $\alpha$ is not actually well-ordered (it is only well-ordered when viewed in $M$) so $m$ \emph{does not} encode a ${{\Phi}}$-recursive linear order on $\omega$.  Thus, set $y \coloneq 0$.
				\end{description}
	
				\item[Case 2:] If $q \forces (\text{$m$ encodes a ${{\Phi}}$-recursive linear order on $\omega$} \wedge m \notin \mathcal{O}^{{{\Phi}}})$, then $m$ cannot encode a ${{\Phi}}$-recursive well-ordering of $\omega$. Thus, set $y \coloneq 0$.
	
				\item[Case 3:] If $q \forces \neg (\text{$m$ encodes a ${{\Phi}}$-recursive linear order on $\omega$})$, then set $y \coloneq 0$.
			\end{description}
		\end{description}
	
		\item[All Other Stages $n$:] Let $p_n = p_{n-1}$.
	\end{description}
	
	Define $\Phi$ to be the unique set such that
	\begin{equation*}
		(x,y,\sigma) \in \Phi \iff \text{there exists $n \in \mathbb{N}$ such that $(x,y,\sigma) \in^M \Phi_{p_n}$}.
	\end{equation*}
	Thanks to Stages $n=2^m$ and the observation that $G \coloneq \{ q \in \mathbb{P}^M \mid \exists n ~ (q \leq p_n) \}$ is consequently $M$-generic, $\Phi$ is an $M$-generic Turing functional. Thanks to Stages $n=2^m \cdot 3$, $\Phi(\mathcal{O}) = A$. Thanks to Stages $n=2^m \cdot 5$, $\Phi(Z) = \mathcal{O}^\Phi$.
	
	We also note that in the above construction of $\Phi$, (assuming $p_{n-1}$ is given)\ldots
	\begin{description}
		\item[] \ldots Stage $n=2^m$ is recursive in $\mathcal{O}^M \oplus Z \oplus \mathcal{O} \turingleq A$,
		\item[] \ldots Stage $n=2^m \cdot 3$ is recursive in $\mathcal{O} \turingleq A$, 
		\item[] \ldots Stage $n=2^m \cdot 5$ (Substage 1) is recursive in $\mathcal{O} \oplus \mathcal{O}^M \oplus Z \turingleq A$, 
		\item[] \ldots Stage $n=2^m \cdot 5$ (Substage 2) is recursive in $Z \turingleq A$, 
		and 
		\item[] \ldots Stage $n$ (for all other $n$) is recursive.
	\end{description}
	Thus,
	\begin{equation*}
		\Phi \turingleq M \oplus (\mathcal{O}^M \oplus Z) \oplus A \turingleq A.
	\end{equation*}
	
	Applying \cref{turing functional of real is reducible to turing functional and real} we find
	\begin{equation*}
		A = \Phi(\mathcal{O}) \turingleq \mathcal{O} \oplus \Phi \turingleq \mathcal{O}^\Phi \turingeq \Phi(Z) \turingleq Z \oplus \Phi \turingleq Z \oplus A \turingeq A
	\end{equation*}
	so we have Turing equivalence throughout. $B= \Phi$ is hence the desired real.
\end{proof}

\cref{posner-robinson for hyperjumps} can be generalized, replacing the real $Z$ by a sequence of reals.

\begin{thm} \label{posner-robinson for hyperjumps multiple reals}
	Suppose $Z$ and $A$ are reals such that $Z \oplus \mathcal{O} \turingleq A$ and $0 \hyple (Z)_k$ for every $k \in \mathbb{N}$. Then there exists $B$ such that for every $k \in \mathbb{N}$
	\begin{equation*}
		A \turingeq \mathcal{O}^B \turingeq B \oplus (Z)_k \turingeq B \oplus \mathcal{O}.
	\end{equation*}
\end{thm}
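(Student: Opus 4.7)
The plan is to adapt the proof of \cref{posner-robinson for hyperjumps} in exact analogy with how the proof of \cref{gandy-kreisel basis theorem multiple reals} adapts the proof of \cref{gandy-kreisel basis theorem}: upgrade the ground model $M$ to avoid every $(Z)_k$ at once, and split the single $Z$-coding stage of the construction into a family of stages indexed by $k$. First I would prove a multi-reals version of \cref{gandy-kreisel basis theorem omega-model avoiding sequence of nonhyp reals} by invoking \cref{gandy-kreisel basis theorem multiple reals} in place of \cref{gandy-kreisel basis theorem}, applied to the $\Sigma^1_1$ class of codes for countable $\omega$-models of $\mathsf{ZFC}$. Since $(Z)_k \oplus \mathcal{O} \turingleq Z \oplus \mathcal{O} \turingleq A$ and $0 \hyple (Z)_k$ for every $k$, this produces a countable $\omega$-model $M$ of $\mathsf{ZFC}$ with $\mathcal{O}^M \turingeq A$ and $(Z)_k \hypnleq M$ (hence $(Z)_k \notin M$) for every $k \in \mathbb{N}$.

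Next I would construct an $M$-generic Kumabe-Slaman Turing functional $\Phi$ via finite approximations $p_0 \leq p_1 \leq \cdots$ mirroring the stage structure of the proof of \cref{posner-robinson for hyperjumps}, but splitting the coding stage into a $k$-indexed family. Concretely: Stages $n = 2^m$ meet the $m$-th dense open subset of $\mathbb{P}^M$ via \cref{kumabe-slaman lemma}, now forbidding new computations along $\mathcal{O}$ and along the finitely many $(Z)_{k'}$ used in earlier stages; Stages $n = 2^m \cdot 3$ extend along $\mathcal{O}$ to force $\Phi(\mathcal{O})(m) = A(m)$; and Stages $n = 2^m \cdot 5 \cdot 7^k$ replay verbatim the two-substage procedure of the original Stage $n = 2^m \cdot 5$ with $(Z)_k$ in place of $Z$, forcing $\Phi((Z)_k)(m) = \mathcal{O}^\Phi(m)$ by the usual case split on whether $m$ encodes a $\Phi$-recursive linear order and whether the associated order-type is standard or nonstandard in $M$. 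All other stages do nothing.

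The main new difficulty is the initial-segment-separation issue: the single-real argument relied on the WLOG that no initial segment of $Z$ is an initial segment of $\mathcal{O}$, which prevents a computation added along one of them from accidentally redefining $\Phi$ on the other; with infinitely many $(Z)_k$'s and only two symbols available, no such condition can be imposed pairwise in one stroke. My plan to resolve this is to first pass to Turing-equivalent reals obtained by prepending distinguishing prefixes, for instance replacing $\mathcal{O}$ by $\langle 0\rangle\concat\mathcal{O}$ and each $(Z)_k$ by $\langle 1\rangle\concat\langle 0\rangle^k\concat\langle 1\rangle\concat(Z)_k$ (so that, for every $k$, any initial segment of the $k$-th tagged real of length greater than $k+1$ is an initial segment of no other tagged real, and no initial segment of $\langle 0\rangle\concat\mathcal{O}$ is an initial segment of any tagged $(Z)_k$). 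Since each finite stage sees only finitely many of these reals, the witness string $\sigma$ at each stage can be taken long enough to pinpoint exactly the intended real, preventing any cross-contamination of computations. With this in place, the remaining verification (genericity from Stages $n = 2^m$, $\Phi(\mathcal{O}) = A$ from Stages $n = 2^m \cdot 3$, $\Phi((Z)_k) = \mathcal{O}^\Phi$ for each $k$ from Stages $n = 2^m \cdot 5 \cdot 7^k$, and the complexity bound $\Phi \turingleq A$) is identical to the single-real case, and setting $B \coloneq \Phi$ together with \cref{turing functional of real is reducible to turing functional and real} yields the desired chain $A \turingeq \mathcal{O}^B \turingeq B \oplus (Z)_k \turingeq B \oplus \mathcal{O}$ for every $k$.
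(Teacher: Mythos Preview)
Your approach is essentially the paper's: both upgrade the ground model via the multi-real version of the basis theorem (\cref{gandy-kreisel basis theorem multiple reals}), split the $Z$-coding into a $k$-indexed family of stages, and at each stage protect only finitely many of the $(Z)_k$'s when invoking \cref{kumabe-slaman lemma}. The paper indexes the coding stages by $n = 2^m \cdot 5^{k+1}$ where you use $2^m \cdot 5 \cdot 7^k$, and in place of your prefix-tagging device it simply lengthens $\sigma$ at each coding stage until it avoids the finitely many currently protected reals; these are cosmetic differences, and your tagging is a clean way to make the separation explicit.

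There is, however, one genuine oversight. Your tagging handles cross-contamination among the \emph{coding} stages, but it does not protect $\widetilde{(Z)_k}$ from the generic-meeting stages $n = 2^m$ that occur \emph{before} $(Z)_k$ has entered your protection list. At such a stage the extension $q \in D_m$ may well add some $(x, y, \sigma)$ with $\sigma \subset \widetilde{(Z)_k}$, since \cref{kumabe-slaman lemma} only bars finitely many reals and $k$ is not yet among them. Then at the later stage $2^x \cdot 5 \cdot 7^k$ your instruction to ``replay verbatim the two-substage procedure'' would attempt to add a second computation with first coordinate $x$ along $\widetilde{(Z)_k}$, violating the Turing-functional condition. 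The paper patches this by inserting, at the start of each $(Z)_k$-coding stage, a check for whether $\Phi_{p_{n-1}}((Z)_k)(m)$ is already defined and, if so, doing nothing. Consequently the paper explicitly notes that one only obtains $\Phi((Z)_k) \turingeq \mathcal{O}^\Phi$ via a finite-difference argument, not the equality you claim. With that amendment (the skip check, and weakening $=$ to $\turingeq$), your argument goes through and matches the paper's.
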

\begin{proof}
	The proof of \cref{posner-robinson for hyperjumps} may be adapted by making the following adjustments. First, we replace the use of \cref{gandy-kreisel basis theorem omega-model avoiding sequence of nonhyp reals} with the following lemma:
	
	\begin{lem}
		Suppose $Z$ and $A$ are reals such that $Z \oplus \mathcal{O} \turingleq A$ and $0 \hyple (Z)_k$ for every $k \in \mathbb{N}$. Then there exists a (code for a) countable $\omega$-model $M$ of $\mathsf{ZFC}$ such that $\mathcal{O}^M \turingeq A$ and $(Z)_k \notin M$ for every $k \in \mathbb{N}$.
	\end{lem}
	\begin{proof}
		Replace the usage of \cref{gandy-kreisel basis theorem} in the proof of \cref{gandy-kreisel basis theorem omega-model avoiding sequence of nonhyp reals} with \cref{gandy-kreisel basis theorem multiple reals}.
	\end{proof}
	
	This yields a (code for a) countable $\omega$-model $M$ of $\mathsf{ZFC}$ such that $\mathcal{O}, (Z)_0,(Z)_1,\ldots \notin M$ and $\mathcal{O}^M \turingeq A$. We assume without loss of generality that $\mathcal{O} \neq (Z)_k$ for each $k$. 
	
	The adjustments to the construction are the following:
	\begin{itemize}
		\item In Stages $n=2^m$ and $n=2^m\cdot 3$, we avoid adding new computations along $(Z)_0,\ldots, (Z)_n$ and $\mathcal{O}$. 
		\item Replace Stage $n=2^m \cdot 5$ with Stages $n=2^m \cdot 5^{k+1}$, and at the beginning of Stage $n=2^m \cdot 5^{k+1}$, first check if there exists $y$ and $\sigma \subset (Z)_k$ such that $(m,y,\sigma) \in \Phi_{p_{n-1}}$. If such a $y$ and $\sigma$ are found, do nothing and proceed to the next stage. Otherwise, proceed as in Stage $n=2^m\cdot 5$ of the proof of \cref{posner-robinson for hyperjumps}, with the same adjustment of avoiding adding new computations along $(Z)_0,\ldots, (Z)_n$ and $\mathcal{O}$ as above.
	\end{itemize}
	
	Note that it is no longer necessarily the case that $\Phi((Z)_k) = \mathcal{O}^\Phi$ for every $k \in \mathbb{N}$, as early stages may have added computations to $\Phi$ which make $\Phi((Z)_k)$ disagree with $\mathcal{O}^\Phi$. However, after Stage $k$, no other stages add new computations along $(Z)_k$ except for those purposely added (i.e., in Stages $n=2^m \cdot 5^{k+1}$). It follows that $\Phi((Z)_k)$ and $\mathcal{O}^\Phi$ differ only on a finite set of indices, so $\Phi((Z)_k) \turingeq \mathcal{O}^\Phi$.
	
	In the resulting construction of $\Phi$, (assuming $p_{n-1}$ is given)
	\begin{description}
		\item[] \ldots Stage $n=2^m$ is recursive in $\mathcal{O}^M \oplus \bigoplus_{i=0}^n{(Z)_i} \oplus \mathcal{O} \turingleq A$,
		\item[] \ldots Stage $n=2^m \cdot 3$ is recursive in $\mathcal{O} \turingleq A$, 
		\item[] \ldots Stage $n=2^m \cdot 5^{k+1}$ (Substage 1) is recursive in $\mathcal{O} \oplus \mathcal{O}^M \oplus \bigoplus_{i=0}^n{(Z)_i} \turingleq A$, 
		\item[] \ldots Stage $n=2^m \cdot 5^{k+1}$ (Substage 2) is recursive in $(Z)_k \turingleq A$, 
		and 
		\item[] \ldots Stage $n$ (for all other $n$) is recursive.
	\end{description}
	Thus,
	\begin{equation*}
		\Phi \turingleq M \oplus (\mathcal{O}^M \oplus Z) \oplus A \turingeq A.
	\end{equation*}
	The proof concludes as in the proof of \cref{posner-robinson for hyperjumps}.
\end{proof}

\section{Open Problems}

In light of \cref{gandy-kreisel basis theorem,,posner-robinson for hyperjumps}, it is natural to ask whether they can be combined into one theorem. In other words, for which uncountable $\Sigma^1_1$ classes $K \subseteq \cantor$ do the following properties hold?

\begin{property} \label{posner-robinson property}
	Suppose $Z$ and $A$ are reals such that $Z \oplus \mathcal{O} \turingleq A$ and $0 \hyple (Z)_k$ for every $k \in \mathbb{N}$. Then there exists $B \in K$ such that
	\begin{equation*}
		A \turingeq \mathcal{O}^B \turingeq B \oplus Z \turingeq B \oplus \mathcal{O}.
	\end{equation*}
\end{property}

\begin{property} \label{countable posner-robinson property}
	Suppose $Z$ and $A$ are reals such that $Z \oplus \mathcal{O} \turingleq A$ and $0 \hyple (Z)_k$ for every $k \in \mathbb{N}$. Then there exists $B \in K$ such that for every $k$
	\begin{equation*}
		A \turingeq \mathcal{O}^B \turingeq B \oplus (Z)_k \turingeq B \oplus \mathcal{O}.
	\end{equation*}
\end{property}

The following theorem answers some special cases of this problem.

\begin{thm} \label{posner-robinson for hyperjumps in upward closed sigma11 classes}
	Let $L_\turing = \{X \mid \mathcal{O}^X \turingeq X \oplus \mathcal{O}\}$. Suppose $K$ is an uncountable $\Sigma^1_1$ class which is Turing degree upward closed in $L_\turing$, i.e., whenever $X,Y \in L_\turing$, $X \in K$, and $X \turingleq Y$, then there is $Y_0 \in K$ such that $Y \turingeq Y_0$. Then $K$ has \cref{{posner-robinson property},,countable posner-robinson property}.
\end{thm}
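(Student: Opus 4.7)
The plan is to reduce to Theorem 3.7 (the unconstrained multiple-reals Posner-Robinson) combined with Theorem 2.1 (the Gandy-Kreisel basis theorem for $K$), and then lift the resulting real back into $K$ via the upward-closure hypothesis. Note that Property 4.2 implies Property 4.1 under the shared hypothesis: if $B \in K$ satisfies Property 4.2, then $A \turingeq B \oplus (Z)_0 \turingleq B \oplus Z \turingleq A$ (the last inequality using $B \turingleq B \oplus \mathcal{O} \turingeq A$ and $Z \turingleq A$), so $B \oplus Z \turingeq A$. Hence it suffices to prove Property 4.2.

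First I apply Theorem 2.1 to $K$ with $Z = A = \mathcal{O}$, obtaining $X_0 \in K$ with $\mathcal{O} \turingeq \mathcal{O}^{X_0} \turingeq X_0 \oplus \mathcal{O}$; in particular $X_0$ is hyperarithmetical and $X_0 \in L_\turing$. Next I apply Theorem 3.7 to the given $Z$ and $A$ to obtain $B_0$ satisfying $A \turingeq \mathcal{O}^{B_0} \turingeq B_0 \oplus (Z)_k \turingeq B_0 \oplus \mathcal{O}$ for every $k$; in particular $B_0 \in L_\turing$. I now set $B' \coloneq B_0 \oplus X_0$. Since $X_0$ is hyperarithmetical, $B' \hypeq B_0$; the hyperjump is invariant under hyperdegree equivalence (if $X \hypleq Y$ then $\mathcal{O}^X$ is $\Pi^1_1(Y)$, hence $\mathcal{O}^X \turingleq \mathcal{O}^Y$), so $\mathcal{O}^{B'} \turingeq \mathcal{O}^{B_0} \turingeq A$. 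Using $X_0 \turingleq \mathcal{O} \turingleq A$, we also have $B' \oplus \mathcal{O} \turingeq B_0 \oplus \mathcal{O} \turingeq A$ and $B' \oplus (Z)_k \turingeq B_0 \oplus (Z)_k \turingeq A$ for every $k$. Thus $B'$ satisfies all the Property 4.2 equivalences, $B' \in L_\turing$, and $X_0 \turingleq B'$ by construction.

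Finally I apply the upward-closure hypothesis on $K$: with $X_0 \in K \cap L_\turing$, $B' \in L_\turing$, and $X_0 \turingleq B'$, there is $B \in K$ with $B \turingeq B'$. Since every equivalence in Property 4.2 depends only on the Turing degree of $B$, this $B$ witnesses the desired conclusion. The key conceptual step is the choice to join $B_0$ with a hyperarithmetical element $X_0 \in K$ so that the upward closure becomes applicable; once this combination is identified, the verification is straightforward, and the only nontrivial ingredient is the invariance of the hyperjump under hyperdegree equivalence, which is a standard consequence of the $\Pi^1_1$-completeness of the hyperjump.
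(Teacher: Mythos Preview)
Your argument is correct. Both your proof and the paper's combine the basis theorem (\cref{gandy-kreisel basis theorem}) with Posner--Robinson for hyperjumps and then invoke the upward-closure hypothesis, but the order and the glue differ. The paper first obtains a Posner--Robinson witness $C$, then applies \cref{gandy-kreisel basis theorem} \emph{relativized to $C$} to produce $B_0 \in K$ with $\mathcal{O}^C \turingeq \mathcal{O}^{B_0 \oplus C} \turingeq B_0 \oplus \mathcal{O}^C$, and finally uses upward closure on $B_0 \turingleq B_0 \oplus C$. You instead apply the unrelativized \cref{gandy-kreisel basis theorem} once to extract a hyperarithmetical $X_0 \in K \cap L_\turing$, apply \cref{posner-robinson for hyperjumps multiple reals} separately to get $B_0$, join them, and rely on the standard fact that $X \hypeq Y$ implies $\mathcal{O}^X \turingeq \mathcal{O}^Y$ (via $\Pi^1_1$-completeness of the hyperjump) to see that $B_0 \oplus X_0 \in L_\turing$. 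Your route avoids relativization at the cost of invoking that invariance fact; the paper's route avoids that fact at the cost of relativizing. Your observation that \cref{countable posner-robinson property} immediately yields \cref{posner-robinson property} under the shared hypothesis is a small economy the paper does not use (it argues the single-$Z$ case first and then redoes it for sequences). One minor advantage of your decomposition is that both $X_0 \in L_\turing$ and $B' = B_0 \oplus X_0 \in L_\turing$ are verified explicitly before upward closure is invoked.
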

\begin{proof}
	This theorem is analogous to \cite[Lemma 3.3]{jananthan2020pseudojump}. By \cref{gandy-kreisel basis theorem}, let $C$ be such that 
	\begin{equation} \label{posner-robinson for hyperjumps c}
		A \turingeq \mathcal{O}^C \turingeq C \oplus Z \turingeq C \oplus \mathcal{O}. \tag*{($\ast$)}
	\end{equation}
	\cref{gandy-kreisel basis theorem}, relativized to $C$, yields $B_0 \in K$ such that
	\begin{equation} \label{gandy basis theorem relative to c}
		\mathcal{O}^C \turingeq \mathcal{O}^{B_0 \oplus C} \turingeq B_0 \oplus \mathcal{O}^C. \tag*{($\dagger$)}
	\end{equation}
	Combining \ref{gandy basis theorem relative to c} and \ref{posner-robinson for hyperjumps c} shows that $\mathcal{O}^{B_0 \oplus C} \turingeq B_0 \oplus C \oplus \mathcal{O}$. As $B_0 \turingleq B_0 \oplus C$, there is $B \in K$ such that $B \turingeq B_0 \oplus C$ by hypothesis. In particular,
	\begin{equation*}
		\mathcal{O}^C \turingeq \mathcal{O}^B \turingeq B \oplus \mathcal{O}.
	\end{equation*}
	Moreover, in combination with \ref{posner-robinson for hyperjumps c},
	\begin{equation*}
		A \turingeq \mathcal{O}^B \turingeq B \oplus \mathcal{O} \turingeq B \oplus Z.
	\end{equation*}
	This shows that $K$ has \cref{posner-robinson property}.

	To show that $K$ has \cref{countable posner-robinson property}, repeat the above argument using \cref{gandy-kreisel basis theorem multiple reals} instead of \cref{gandy-kreisel basis theorem}.
\end{proof}

\begin{remark}
	The proof of \cref{posner-robinson for hyperjumps in upward closed sigma11 classes} is easily adapted to prove the same result with $L_\turing = \{ X \mid \mathcal{O}^X \turingeq X \oplus \mathcal{O} \}$ replaced by $L_ \hyp = \{ X \mid \mathcal{O}^X \hypeq X \oplus \mathcal{O}\}$.
\end{remark}

The hyperarithmetical analog of the Pseudojump Inversion Theorem \cite[Theorem 2.1, pg. 601]{jockusch1983pseudojump} also remains open. Namely, suppose $V_e^X$ is an effective enumeration of the $\Pi^{1,X}_1$ predicates, uniformly in $X$. Define the \textbf{$e$-th pseudo-hyperjump} by 
\begin{equation*}
	\hj_e(X) \coloneq X \oplus V_e^X.
\end{equation*}
Does the following result hold?

\begin{conj} \label{pseudo-hyperjump inversion}
	Suppose $e \in \mathbb{N}$ and $A$ is a real such that $\mathcal{O} \turingleq A$. Then there exists $B$ such that
	\begin{equation} \label{pseudo-hyperjump inversion equation}
		A \turingeq \hj_e(B) \turingeq B \oplus \mathcal{O}.
	\end{equation}
\end{conj}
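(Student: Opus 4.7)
The plan is to adapt the Kumabe--Slaman forcing approach of \cref{posner-robinson for hyperjumps} to the pseudo-hyperjump setting. First, applying \cref{gandy-kreisel basis theorem omega-model avoiding sequence of nonhyp reals}, I would produce a countable non-well-founded $\omega$-model $M \models \mathsf{ZFC}$ with $\mathcal{O}^M \turingeq A$. Following \cref{posner-robinson for hyperjumps}, I would then build an $M$-generic Turing functional $\Phi$ by finite extensions $p_0 \leq p_1 \leq \cdots$ in $\mathbb{P}^M$, identify $\Phi$ with a real via a standard encoding, and take $B := \Phi$. Genericity stages meeting every dense subset of $\mathbb{P}^M$ lying in $M$ would be interleaved with coding stages forcing $\Phi(\mathcal{O}) \turingeq A$, yielding $A \turingleq B \oplus \mathcal{O}$; the reverse inequality $B \oplus \mathcal{O} \turingleq A$ would follow from the fact that the entire construction is recursive in $\mathcal{O}^M \oplus \mathcal{O} \turingeq A$.

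The new ingredient is the control of $V_e^B$. Uniformly in $X$ the set $V_e^X$ is $\Pi^{1,X}_1$, hence computable from $\mathcal{O}^X$ by a fixed index. I would introduce ``decision stages'' at $n = 2^m \cdot 7$, analogous to the $n = 2^m \cdot 5$ stages in \cref{posner-robinson for hyperjumps}, where the current condition $p_{n-1}$ is extended inside $M$ to a condition $q$ that decides (from $M$'s perspective) whether $m \in V_e^\Phi$; the Boolean value would then be recorded along $\mathcal{O}$ and used both to ensure $V_e^B \turingleq B \oplus \mathcal{O}$ and, by splitting into cases according to whether $m$ can be forced into $V_e^\Phi$, to code bits of $A$ into either $B$ or $V_e^B$ in the manner of a Jockusch--Shore style case analysis. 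Composing these stages in the correct order should yield $\hj_e(B) = B \oplus V_e^B \turingeq A$, while $B \oplus \mathcal{O} \turingeq A$ is preserved from the earlier coding, completing the required chain of equivalences.

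The principal obstacle --- and the apparent reason the conjecture is still open --- is that the predicate $m \in V_e^\Phi$ is defined by well-foundedness of an auxiliary $\Phi$-recursive tree, and a non-well-founded $\omega$-model $M$ may certify well-foundedness via a nonstandard ordinal even though the tree has an infinite branch externally. Hence the Boolean value decided inside $M$ at a decision stage can diverge from the true external value of $V_e^B(m)$, and any coding based on $M$-internal decisions risks being sabotaged on the ill-founded part of $M$. In the proof of \cref{posner-robinson for hyperjumps} this very discrepancy is in fact \emph{exploited} to encode bits into $\mathcal{O}^B$; here, by contrast, we want the coding to survive relative to the strictly weaker pseudo-hyperjump, so the same trick cannot be invoked directly. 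Making the construction robust to this disagreement --- perhaps by enlarging conditions to pin down well-foundedness of the trees witnessing $m \in V_e^\Phi$ at genuinely standard ordinals, or by iterating a Posner--Robinson-style inversion transfinitely up to a carefully chosen recursive ordinal --- appears to be the crux of the problem.
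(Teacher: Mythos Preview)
The statement you are attempting to prove is labeled as a \emph{Conjecture} in the paper and is explicitly listed in \S4 (``Open Problems'') as unresolved; the paper offers no proof of it, only the question ``Does the following result hold?''. Consequently there is no paper proof against which to compare your proposal.

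More importantly, your proposal is not a proof but a heuristic plan that you yourself recognize as incomplete. Your third paragraph correctly identifies the genuine gap: at the decision stages $n = 2^m \cdot 7$ the model $M$ may force ``$m \in V_e^\Phi$'' by ranking the associated tree with a nonstandard ordinal, so that externally $m \notin V_e^B$. In the proof of \cref{posner-robinson for hyperjumps} this mismatch is harmless because the coding target is $\mathcal{O}^\Phi$ itself and the case split (standard versus nonstandard rank) lands on the correct bit of $\mathcal{O}^\Phi$ in each case. For a general pseudo-hyperjump $V_e^\Phi$ there is no such dichotomy: the nonstandard-rank case does not tell you the true value of $V_e^B(m)$, so you cannot record it along $\mathcal{O}$ or use it for Jockusch--Shore style coding without risking an error. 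Your suggested fixes (forcing well-foundedness at standard ordinals, or iterating transfinitely) are not worked out and face obvious difficulties---e.g., ``pinning down well-foundedness at standard ordinals'' is not something one can decide recursively in $\mathcal{O}^M$, since the well-founded part of $\Ord^M$ is not definable in $M$. As written, the proposal does not close this gap, and the conjecture remains open exactly as the paper says.
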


Even if \cref{pseudo-hyperjump inversion} holds, this leaves open the question of characterizing the $\Sigma^1_1$ classes $K \subseteq \cantor$ with the following properties:

\begin{property}
	Suppose $e \in \mathbb{N}$ and $A$ is a real such that $\mathcal{O} \turingleq A$. Then there exists $B \in K$ such that \cref{pseudo-hyperjump inversion equation} holds.
\end{property}

\begin{property}
	Suppose $e \in \mathbb{N}$ and $Z$ and $A$ are reals such that $Z \oplus \mathcal{O} \turingleq A$ and $0 \hyple (Z)_k$ for each $k \in \mathbb{N}$. Then there exists $B \in K$ such that \cref{pseudo-hyperjump inversion equation} holds and $(Z)_k \nhypleq B$ for every $k \in \mathbb{N}$.
\end{property}

\begin{property}
	Suppose $e \in \mathbb{N}$ and $Z$ and $A$ are reals such that $Z \oplus \mathcal{O} \turingleq A$ and $0 \hyple Z$. Then there exists $B \in K$ such that
	\begin{equation*}
		A \turingeq \hj_e(B) \turingeq B \oplus Z \turingeq B \oplus \mathcal{O}.
	\end{equation*}
\end{property}

\bibliographystyle{amsplain}
\bibliography{biblio}

\end{document}